\theoremstyle{plain}
\numberwithin{equation}{section}
\newcommand{\BC}{{\mathbb C}}
\newcommand{\BK}{{\mathbb K}}
\newcommand{\BN}{{\mathbb N}}
\newcommand{\BR}{{\mathbb R}}
\newcommand{\cD}{{\mathcal D}}
\newcommand{\cH}{{\mathcal H}}
\newcommand{\cX}{{\mathcal X}}
\newcommand{\cY}{{\mathcal Y}}
\newcommand{\de}{\delta}\newcommand{\De}{\Delta}
\newcommand{\vep}{\varepsilon}
\newcommand{\im}{\textup{Im\,}}
\newcommand{\re}{\textup{Re\,}}
\newcommand{\mat}[2]{\ensuremath{\left[\begin{array}{#1}#2\end{array} \right]}}
\newcommand{\sbm}[1]{\left[\begin{smallmatrix} #1\end{smallmatrix}\right]}
\newcommand{\tu}[1]{\textup{#1}}
\newcommand{\wtil}[1]{{\widetilde{#1}}}
\newcommand{\half}{\frac{1}{2}}
\newcommand{\ands}{\quad\mbox{and}\quad}
\theoremstyle{plain}
\newtheorem{theorem}{Theorem}[section]
\newtheorem{corollary}[theorem]{Corollary}
\newtheorem{lemma}[theorem]{Lemma}
\newtheorem{proposition}[theorem]{Proposition}
\theoremstyle{definition}
\newtheorem{example}[theorem]{Example}
\newtheorem{remark}[theorem]{Remark}
\newcommand{\mattwo}[4]{\begin{bmatrix}#1 & #2\\ #3 & #4\end{bmatrix}}
\begin{document}

\title{Cauchy-Riemann equations for free noncommutative functions}

\author[S. ter Horst]{S. ter Horst}
\address{S. ter Horst, Department of Mathematics, Unit for BMI, North-West
University, Potchefstroom, 2531 South Africa, and DST-NRF Centre of Excellence in Mathematical and Statistical Sciences (CoE-MaSS)}
\email{Sanne.TerHorst@nwu.ac.za}

\author[E.M. Klem]{E.M. Klem}
\address{E.M. Klem, Department of Mathematics, Unit for BMI, North-West University, Potchefstroom, 2531 South Africa}
\email{estiaanklem@gmail.com}

\thanks{This work is based on the research supported in part by the National Research Foundation of South Africa (Grant Numbers 90670, 118583, and 94069).}

\subjclass[2010]{Primary 32A10; Secondary 46L52, 26B05}


\keywords{Cauchy-Riemann equations, free noncommutative functions, real noncommutative functions}

\begin{abstract}
In classical complex analysis analyticity of a complex function $f$ is equivalent to differentiability of its real and imaginary parts $u$ and $v$, respectively, together with the Cauchy-Riemann equations for the partial derivatives of $u$ and $v$. We extend this result to the context of free noncommutative functions on tuples of matrices of arbitrary size. In this context, the real and imaginary parts become so called real noncommutative functions, as appeared recently in the context of L\"owner's theorem in several noncommutative variables. Additionally, as part of our investigation of real noncommutative functions, we show that real noncommutative functions are in fact noncommutative functions.
\end{abstract}

\maketitle

\section{Introduction}\label{S:Intro}

Over the last decade a theory of free noncommutative (nc) functions that are evaluated in tuples of matrices of arbitrary size was developed. The theory becomes particularly rich when the functions have a domain that is assumed to be right (or left) admissible, in which case the functions admit a Taylor expansion and, under mild boundedness assumptions, are analytic. We refer to \cite{KVV14} for the first book that presents a comprehensive account of the theory, as well the seminal paper \cite{T73} by J.L. Taylor. Precise definitions will be given a little further in this introduction.

More recently, in connection with L\"owner's theorem \cite{PTD17,P18,P-Arx16}, the notion of real nc functions appeared. These functions have domains that consist of tuples on Hermitian matrices, precluding the right (or left) admissibility property, and satisfy slightly different conditions. Another instance where real nc function come up in a natural way is as the real and imaginary part of a nc function. In the present paper we derive the noncommutative Cauchy-Riemann equations for the real and imaginary part of a nc function and consider the question when two real nc functions satisfying the noncommutative Cauchy-Riemann equations appear as the real and imaginary part of a nc function.

We will now provide more precise definitions and state our main result. Throughout $\BC^{n\times n}$ denotes the complex vector space of $n\times n$ complex matrices and $\cH_n$ the real vector space of $n\times n$ Hermitian matrices. For a positive integer $d$, we consider functions with domains in
\[
\BC^d_\tu{nc}:=\coprod_{n=1}^\infty (\BC^d)^{n\times n}=\coprod_{n=1}^\infty (\BC^{n\times n})^d \quad\mbox{or}\quad
\cH_\tu{nc}^d:=\coprod_{n=1}^\infty (\cH_n)^d.
\]
In case $d=1$ we omit it as a superscript and simply write $\BC_\tu{nc}$ and $\cH_\tu{nc}$. A subset $\cD$ of $\BC^d_\tu{nc}$ or $\cH_\tu{nc}^d$ is said to be a nc set in case it {\em respects direct sums}:
\[
X,Y\in\cD \quad \Longrightarrow \quad X\oplus Y=\mat{cc}{X&0\\0&Y}\in\cD.
\]
In some papers the converse implication as well as additional features are also assumed, cf., \cite{PTD17,P18}. See Lemma \ref{L:realfreesets} below as well as the paragraph preceding this lemma. For a nc set $\cD$ and a positive integer $n$ we define $\cD_n:=\cD\cap \BC^{n\times n}$. A nc set $\cD\subset \BC^d_\tu{nc}$ is called {\em right admissible} in case
\begin{equation}\label{rightadmis}
X\in\cD_n,\, Y\in\cD_m,\, Z\in\BC^{n\times m}  \Longrightarrow
\mat{cc}{X&\! rZ\\ 0&\! Y}\in\cD_{n+m}\mbox{ for some $0\neq r\in\BC$}.
\end{equation}
In case the nc set $\cD$ is right admissible and closed under similarity, then the ``for some'' part in the right-hand side of \eqref{rightadmis} can be replaced by ``for all.''  There is a dual notion of left admissibility, see page 18 and onwards in \cite{KVV14}, but we will not need this notion in the present paper.

A function $w:\cD \to \BC_\tu{nc}$ whose domain $\cD$ is a nc set in $\BC_\tu{nc}^d$ is called a {\em nc function} in case it has the following properties:
\begin{itemize}\label{NC123}
		\item[(NC-i)] $w$ is {\em graded}, i.e., $w(\cD_n)\subset \BC^{n\times n}$ for $n=1,2,\ldots$;

		\item[(NC-ii)]  $w$ {\em respects direct sums}, i.e., for all $X,Y\in\cD$ we have
\[
w(X\oplus Y)=w(X)\oplus w(Y);
\]

		\item[(NC-iii)] $w$ {\em respects similarities}, i.e., for all $X\in\cD_n$, $S\in\BC^{n\times n}$ invertible so that $S X S^{-1}\in\cD_n$, we have
\[
w(S X S^{-1})=S w(X) S^{-1}.
\]
\end{itemize}
Much of the theory of nc functions developed in \cite{KVV14} is for nc functions whose domains are right (or left) admissible, in which case for each $X$, $Y$, $Z$ and $r\neq 0$ as in \eqref{rightadmis} one can define the right difference-differential operator $\Delta w(X,Y)$ at the point $Z$ via
\begin{equation}\label{DDoperator}
w\left(\mat{cc}{X& r Z\\ 0&Y}\right) = \mat{cc}{w(X)& r \Delta w(X,Y)(Z)\\ 0&w(Y)},
\end{equation}
with the zero and two block diagonal entries following from (NCi)--(NCiii). This right difference-differential operator is linear in $Z$ and provides a difference formula for $w$ leading to the so-called Taylor-Taylor expansion of $w$, and, under certain boundedness assumptions on $w$, provides the  G\^{a}teaux-derivative of $w$; see \cite{KVV14} for an elaborate treatment. Recall that the G\^{a}teaux- or G-derivative of a function $g:\cD_g\to\cY$ with domain $\cD_g\subset\cX$, with $\cX$ and $\cY$ Banach spaces over the field $\BK=\BC$ or $\BK=\BR$, at a point $X\in\cX$ in $\cD_g$ in the direction $Z\in\cX$ is given by
\begin{equation}\label{Gderive}
Dg(X)(Z):=\lim_{\BK\ni t\to 0}\frac{g(X+tZ)-g(X)}{t},
\end{equation}
provided the limit exist. Then $g$ is said to be G\^{a}teaux- or G-differentiable in case $\cD_g$ is open and $Dg(X)(Z)$ exists for all $X\in\cD_g$ and all $Z\in\cX$. In the case of nc functions, G-differentiability means that for each positive integer $n$ the restriction of the domain to $(\BC^{n\times n})^d$ should be G-differentiable; see Section \ref{S:DirDer} for further details and references on G-differentiability as well as Fr\'{e}chet- or F-differentiability.

A function $w:\cD_w \to \cH_\tu{nc}$ is called a {\em real nc function} in case its domain $\cD$ is a nc set contained in $\cH_\tu{nc}^d$  which is graded and respects direct sums, i.e., (NC-i) and (NC-ii) above hold, and
\begin{itemize}\label{RNC3}
		\item[(RNC-iii)] $w$ {\em respects unitary equivalence}, i.e., for all $X\in\cD_n$, $U\in\BC^{n\times n}$ unitary so that $U X U^*\in\cD_n$, we have
\[
w(U X U^*)=U w(X) U^*.
\]
\end{itemize}
Despite the seeming limitation of unitary equivalence over similarity, one of the contributions of the present paper is the observation that real nc functions are also nc functions, see Theorem \ref{T:realNC=NC} below. Hence (NC-i), (NC-ii) and (RNC-iii) imply (NC-iii). This result relies heavily on the fact that the domains of real nc functions consist of tuples of Hermitian matrices only. The latter also implies that the domains of real nc functions are `nowhere right admissible,' and hence much of the theory developed in \cite{KVV14} does not apply to real nc functions.

Now, given an nc function $f$ on a right admissible domain $\cD_f\subset \BC^d_\tu{nc}$, we write
\begin{equation}\label{fdec}
f(A+iB)=u(A,B)+iv(A,B),\quad A+iB\in\cD_f,
\end{equation}
for $A,B\in\cH_\tu{nc}^d$ of the same size and with
\[
u(A,B):=\re f(A+iB)\ands v(A,B):=\im f(A+iB).
\]
This defines real nc functions $u$ and $v$ on domain
$\cD=\{(A,B)\in\cH_\tu{nc}^{2d} \colon A+iB \in\cD_f\}$, which is open in $\cH_\tu{nc}^{2d}$ precisely when $\cD_f$ is open in $\BC_\tu{nc}^d$; in both cases open means that the restriction of the domain to $n\times n$ matrices is open in $\cH_n^{2d}$ and $(\BC^{n\times n})^d$, respectively. Furthermore, in case $f$ is G-differentiable, then so are $u$ and $v$ and their G-derivatives satisfy the following noncommutative Cauchy-Riemann equations
\begin{equation}\label{CR-intro}
Du(A,B)(Z_1,Z_2)=Dv(A,B)(-Z_2,Z_1),\ (A,B)\in \cD_n,\ Z_1,Z_2\in\cH_n,\ n\in\BN.
\end{equation}
See Theorem \ref{T:RCpart} for these claims as well as additional results.

Conversely, one may wonder whether G-differentiable real nc functions $u$ and $v$ with open domains $\cD_u$ and $\cD_v$, respectively, in $\cH_\tu{nc}^{2d}$ that satisfy \eqref{CR-intro} on $\cD=\cD_u\cap\cD_v$ define a nc function f via \eqref{fdec}. For this purpose, G-differentiability does not seem to be the appropriate notion of differentiability, and we will rather assume the stronger notion of F-differentiability, in which case the derivative is still obtained via \eqref{Gderive}; see Section \ref{S:DirDer} for further details. Even in classical complex analysis this phenomenon occurs, see \cite{DGM75,GM78} as well as Remark \ref{R:G-diff} below. Our main result is the following theorem.

\begin{theorem}\label{T:CR}
Let $u$ and $v$ be real nc functions with open domains $\cD_u$ and $\cD_v$, respectively, in $\cH_\tu{nc}^{2d}$ that are F-differentiable and satisfy the nc Cauchy-Riemann equations \eqref{CR-intro} on $\cD=\cD_u\cap\cD_v$. Define $f$ on $\cD_f=\{A+iB\in\BC_\tu{nc}^d\colon (A,B)\in\cD\}$ via \eqref{fdec}. Then $f$ is a F-differentiable nc function.
\end{theorem}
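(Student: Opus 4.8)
The plan is to verify directly that $f$ inherits the three defining properties of an nc function together with $\BC$-Fréchet differentiability, deducing the delicate similarity axiom from holomorphicity. First I would dispose of the structural bookkeeping. Since $\cD_u$ and $\cD_v$ are nc sets, so is their intersection $\cD$, and the affine bijection $(A,B)\mapsto A+iB$, with inverse $C\mapsto(\tfrac12(C+C^*),\tfrac1{2i}(C-C^*))$, carries $\cD$ onto $\cD_f$ while intertwining the two direct-sum operations; hence $\cD_f$ is an open nc set and $f$ is well defined. Because $u,v$ are graded into $\cH_\tu{nc}$, $f$ is graded into $\BC_\tu{nc}$, and the respecting of direct sums by $u,v$ gives it for $f$ after collecting real and imaginary parts. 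Unitary conjugation is the one similarity preserving the Hermitian decomposition, $U(A+iB)U^*=UAU^*+i\,UBU^*$ with $UAU^*,UBU^*$ again Hermitian, so (RNC-iii) for $u$ and $v$ immediately yields that $f$ respects unitary equivalence.

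The analytic input comes next. For real $t$ and a direction $W=Z_1+iZ_2$ with $Z_1,Z_2\in\cH_n$ one has $f(C+tW)=u(A+tZ_1,B+tZ_2)+i\,v(A+tZ_1,B+tZ_2)$, so F-differentiability of $u,v$ gives that of $f$ with real-linear derivative $Df(C)(W)=Du(A,B)(Z_1,Z_2)+i\,Dv(A,B)(Z_1,Z_2)$. The point is that the Cauchy–Riemann equations \eqref{CR-intro} upgrade this to a $\BC$-linear derivative: reading \eqref{CR-intro} as $Dv(A,B)(-Z_2,Z_1)=Du(A,B)(Z_1,Z_2)$ and (after substituting $(Z_1,Z_2)\mapsto(-Z_2,Z_1)$ and using linearity) as $Du(A,B)(-Z_2,Z_1)=-Dv(A,B)(Z_1,Z_2)$, one computes $Df(C)(iW)=Du(A,B)(-Z_2,Z_1)+i\,Dv(A,B)(-Z_2,Z_1)=i\,Df(C)(W)$. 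Thus each restriction $f|_{\cD_{f,n}}$ is a $\BC$-Fréchet differentiable map on an open subset of $(\BC^{n\times n})^d$, i.e.\ holomorphic.

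It remains to promote ``respects unitary equivalence'' to the full similarity axiom (NC-iii), and this is the step I expect to be the main obstacle, since Theorem \ref{T:realNC=NC} is not directly available: a general similarity does not preserve Hermiticity, so the polar/commutant reduction used there for Hermitian arguments no longer applies verbatim to the non-Hermitian $C=A+iB$. I would fix $C\in\cD_{f,n}$ and invertible $S$ with $SCS^{-1}\in\cD_{f,n}$, take a polar decomposition $S=UP$ with $P=e^H$, $H$ Hermitian, and use the unitary equivalence already established to reduce to the positive-definite case $S=P$. Then both $z\mapsto f(e^{zH}Ce^{-zH})$ and $z\mapsto e^{zH}f(C)e^{-zH}$ are holomorphic in $z$ where defined, and they coincide for $z\in i\BR$ near $0$, where $e^{zH}$ is unitary and $f$ respects unitary equivalence; the one-variable identity theorem then forces equality, giving $f(PCP^{-1})=Pf(C)P^{-1}$ and hence $f(SCS^{-1})=Sf(C)S^{-1}$.

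The delicate point to be made rigorous is connectivity: the identity theorem only propagates the equality along the connected component (in the $z$-plane) of the set where $e^{zH}Ce^{-zH}$ remains in $\cD_{f,n}$, and a priori the path from $z=0$ to $z=1$ may leave this set. I would address this using the special structure of $\cD_f$ coming from a set of Hermitian tuples, namely that $\cD_{f,n}$ is invariant under unitary conjugation and that the component of the identity in the similarity variable reaches the given orbit point; alternatively one verifies that the two holomorphic maps agree to infinite order at the identity and invokes connectedness of $\cD_{f,n}$ level by level. Assembling the graded, direct-sum and similarity properties with the holomorphicity from the second step then shows that $f$ is an F-differentiable nc function, as claimed.
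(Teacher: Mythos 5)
Your first two steps are sound and essentially coincide with the paper's: the structural bookkeeping is the paper's Proposition \ref{P:uvtof}, and your observation that F-differentiability of $u,v$ together with \eqref{CR-intro} makes the real-linear derivative $Du(A,B)(Z_1,Z_2)+iDv(A,B)(Z_1,Z_2)$ into a $\BC$-linear one is exactly Lemmas \ref{L:Hom} and \ref{L:compdiff}, so $f$ is indeed F-differentiable levelwise. The genuine gap is in your treatment of the similarity axiom (NC-iii), and the ``delicate point'' you flag there is not a technicality but a fatal obstruction at this level of generality. The domains $\cD_u,\cD_v$ are merely nc sets: (RNC-iii) is a conditional statement, not a closure property, so $\cD_{f,n}$ is just an open set, in general neither connected nor invariant under unitary conjugation. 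Consequently: (a) already your reduction to positive-definite $S$ fails, since for $S=UP$ the point $PCP^{-1}=U^*(SCS^{-1})U$ need not lie in $\cD_{f,n}$, so ``unitary respect'' cannot be invoked; (b) the set $\Om=\{z\in\BC \colon e^{zH}Ce^{-zH}\in\cD_{f,n}\}$ contains $0$ and $1$, but nothing prevents them from lying in different connected components of $\Om$, and then the identity theorem propagates nothing from the small imaginary segment through $0$ to $z=1$. Both of your proposed repairs assume precisely what is missing (unitary invariance of $\cD_{f,n}$, or its connectedness), so neither can be carried out from the hypotheses of the theorem.

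The paper avoids this entirely by staying algebraic and, crucially, by moving up a matrix size. It proves (Lemma \ref{L:ComForm}, using \eqref{CR-intro} together with part (a) of Lemma 2.3 of \cite{PTD17} applied to $u$ and $v$) the commutator identity \eqref{ComForm0}, $Df(X)([T,X])=[T,f(X)]$, and then invokes Lemma 2.3 of \cite{PTD17} to conclude that $f$ respects similarities. The mechanism behind that lemma shows why no connectivity is needed: if $X$ and $Y=SXS^{-1}$ both lie in $\cD_{f,n}$, then $X\oplus Y\in\cD_{f,2n}$ because nc sets respect direct sums; taking $T=\sbm{0&0\\ S&0}$ one has $[T,X\oplus Y]=\sbm{0&0\\ SX-YS&0}=0$, so the commutator identity at the point $X\oplus Y$ gives $0=[T,f(X)\oplus f(Y)]$, i.e.\ $Sf(X)=f(Y)S$, purely algebraically. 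The direct sum is what ``connects'' $X$ to $SXS^{-1}$; analytic continuation inside level $n$ cannot. If you keep your (correct) holomorphy step and replace the continuation argument by a proof of \eqref{ComForm0}, your proof goes through.
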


Apart from the present introduction, this paper consists of four sections. In Section \ref{S:real nc} we prove that real nc functions are nc function, consider some examples and look at domain extensions. Next, in Section \ref{S:DirDer} we review the notions of G\^ateaux- and Fr\'echet differentiability for nc functions. The domains of real nc functions are not right-admissible so that the G-derivative cannot be determined algebraically through the difference-differential operator. In the following section we derive properties of the real and imaginary parts of an nc function, including the nc Cauchy-Riemann equations. Finally, in Section \ref{S:CR} we consider the converse direction and prove Theorem \ref{T:CR}.

\section{Real nc functions are nc functions}\label{S:real nc}

In this section we focus on real nc functions only, without assuming any form of differentiability. Our main result is the following theorem.

\begin{theorem}\label{T:realNC=NC}
Real nc functions are nc functions.
\end{theorem}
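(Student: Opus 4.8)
The plan is to start from a real nc function $w$, which by hypothesis already satisfies (NC-i) and (NC-ii), and show that it respects similarities (NC-iii); the whole point is to upgrade the unitary-equivalence hypothesis (RNC-iii) to full similarity invariance, exploiting that every point of the domain is a tuple of Hermitian matrices. So I would fix $X=(X_1,\dots,X_d)\in\cD_n$ and an invertible $S\in\BC^{n\times n}$ with $Y:=SXS^{-1}\in\cD_n$, where both $X$ and $Y$ consist of Hermitian matrices, and aim to prove $w(Y)=Sw(X)S^{-1}$.

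First I would extract the algebraic constraint hidden in the fact that $SX_jS^{-1}$ is again Hermitian. Taking adjoints in $SX_jS^{-1}=(SX_jS^{-1})^*=S^{-*}X_jS^*$ and rearranging gives $PX_j=X_jP$ for every $j$, where $P:=S^*S$ is positive definite; hence, by the functional calculus for $P$, each $X_j$ commutes with $P^{1/2}$ and $P^{-1/2}$. Using the polar decomposition $S=UP^{1/2}$ with $U:=SP^{-1/2}$ unitary, these commutation relations collapse the similarity into a unitary conjugation:
\[
Y=SXS^{-1}=UP^{1/2}XP^{-1/2}U^*=UXU^*.
\]
Now (RNC-iii) applies directly and yields $w(Y)=w(UXU^*)=Uw(X)U^*$.

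The remaining and genuinely nontrivial step is to show that $Uw(X)U^*=Sw(X)S^{-1}$, i.e. that $w(X)$ itself commutes with $P$, even though we have no algebraic formula for $w$. The device is to feed the commutation of $X$ with $P$ back through (RNC-iii) via the one-parameter unitary group $\{P^{it}\}_{t\in\BR}$: since each $X_j$ commutes with $P$ it commutes with every $P^{it}$, so $P^{it}XP^{-it}=X$, and (RNC-iii) gives $w(X)=w(P^{it}XP^{-it})=P^{it}w(X)P^{-it}$ for all $t\in\BR$. Thus $w(X)$ commutes with $P^{it}$ for every real $t$; writing $P=\sum_k\lambda_k E_k$ for the spectral decomposition and using that the characters $t\mapsto\lambda_k^{it}$ are linearly independent (equivalently, differentiating at $t=0$ to get $[w(X),\log P]=0$), I conclude that $w(X)$ commutes with each projection $E_k$, hence with $P$ and $P^{1/2}$. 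Then
\[
Sw(X)S^{-1}=UP^{1/2}w(X)P^{-1/2}U^*=Uw(X)U^*=w(Y),
\]
which is exactly (NC-iii).

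I expect the main obstacle to be precisely this last step: transferring a commutation relation from the argument $X$ to the value $w(X)$ with no explicit description of $w$ available. What makes it work is the observation that $P^{it}$ is unitary, so that the seemingly weaker hypothesis (RNC-iii) is in fact exactly strong enough to propagate the symmetry of $X$ to $w(X)$. Everything else — the Hermitian constraint forcing $[X_j,P]=0$, the polar decomposition reducing similarity to unitary conjugation, and the functional-calculus bookkeeping — is routine.
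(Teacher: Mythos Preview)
Your proof is correct and takes a genuinely different route from the paper. The paper first establishes a stronger intertwining property (their Proposition~\ref{P:RealIP}): whenever $X\in\cD_n$, $Y\in\cD_m$ and $T\in\BC^{n\times m}$ satisfy $XT=TY$, one has $w(X)T=Tw(Y)$. They prove this by normalizing $T$ to a contraction and embedding it in the Halmos unitary rotation $U_T=\sbm{T&D_{T^*}\\ D_T&-T^*}$, which conjugates $X\oplus Y$ to $Y\oplus X$; applying (RNC-iii) and (NC-ii) to this single unitary equivalence and reading off the $(1,1)$ block gives the intertwining relation, from which (NC-iii) is immediate.

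Your argument stays entirely at the level of square invertible $S$ and avoids the Halmos dilation. The polar-decomposition reduction to $Y=UXU^*$ is exactly what the authors note in Remark~\ref{R:iii'}, where they also remark that passing from $w(Y)=Uw(X)U^*$ to $w(Y)=Sw(X)S^{-1}$ ``still seems necessary to have a result like Proposition~\ref{P:RealIP}, at least for the case of positive definite similarities.'' Your $P^{it}$ trick closes precisely this gap: since $P^{it}X(P^{it})^*=X\in\cD_n$, (RNC-iii) forces $w(X)$ to commute with the whole unitary group $\{P^{it}\}$, hence with $\log P$ and therefore with $P^{1/2}$. This is elegant and arguably more elementary than the defect-operator construction. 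The trade-off is that the paper's approach yields the full rectangular intertwining property as a by-product, which is of independent interest, whereas your argument is tailored to the invertible square case and does not obviously generalize to non-invertible or non-square intertwiners.
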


In order to prove this result we first show that real nc functions also respect intertwining.

\begin{proposition}\label{P:RealIP}
A graded function $w:\cD\to \cH_\tu{nc}$ on a nc set $\cD\subset\cH_\tu{nc}$ respects direct sums and unitary equivalence if and only if it respects intertwining: if $X\in\cD_n$, $Y\in\cD_m$, and $T\in\BC^{n\times m}$ so that $XT=TY$, then $w(X)T=Tw(Y)$.
\end{proposition}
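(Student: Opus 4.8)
The plan is to prove the two implications separately, the converse being quick and the forward direction carrying the new idea. Assuming $w$ respects intertwining, I would first note that $X\oplus Y\in\cD_{n+m}$ because $\cD$ is a nc set, so $w(X\oplus Y)$ is defined. The block inclusions $T_1=\mat{c}{I_n\\0}\in\BC^{(n+m)\times n}$ and $T_2=\mat{c}{0\\I_m}\in\BC^{(n+m)\times m}$ satisfy $(X\oplus Y)T_1=T_1X$ and $(X\oplus Y)T_2=T_2Y$, so respecting intertwining forces $w(X\oplus Y)T_1=T_1w(X)$ and $w(X\oplus Y)T_2=T_2w(Y)$; reading off the two block columns gives $w(X\oplus Y)=w(X)\oplus w(Y)$, so direct sums are respected. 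For unitary equivalence, given $X\in\cD_n$ and a unitary $U$ with $UXU^*\in\cD_n$, the identity $(UXU^*)U=UX$ exhibits $U$ as an intertwiner between $UXU^*$ and $X$, whence $w(UXU^*)U=Uw(X)$, i.e. $w(UXU^*)=Uw(X)U^*$.

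For the forward direction, suppose $w$ respects direct sums and unitary equivalence and let $X\in\cD_n$, $Y\in\cD_m$, $T\in\BC^{n\times m}$ with $XT=TY$. The guiding idea is that for ordinary nc functions intertwining is obtained by conjugating $X\oplus Y$ with the upper-triangular \emph{similarity} $\mattwo{I}{T}{0}{I}$; since only unitary equivalence is available here, I would replace this by a genuine unitary. Taking adjoints in $XT=TY$ and using that $X,Y$ are Hermitian yields the symmetric pair $XT=TY$ and $T^*X=YT^*$; consequently $TT^*$ commutes with $X$ and $T^*T$ commutes with $Y$, and hence so do the defect operators $D_{T^*}=(I_n-TT^*)^{1/2}$ and $D_T=(I_m-T^*T)^{1/2}$ (after rescaling so that $\|T\|\le 1$, which is harmless as $XT=TY$ is homogeneous in $T$). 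I then form the elementary rotation
\[
W=\mattwo{D_{T^*}}{T}{-T^*}{D_T},
\]
which is unitary by the standard identity $TD_T=D_{T^*}T$, and verify by a $2\times 2$ block computation that the four relations $D_{T^*}X=XD_{T^*}$, $XT=TY$, $T^*X=YT^*$, $D_TY=YD_T$ say precisely that $W$ commutes with $X\oplus Y$.

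With $W$ in hand the argument closes quickly. Since $\cD$ respects direct sums, $X\oplus Y\in\cD_{n+m}$, and because $W$ commutes with $X\oplus Y$ we have $W(X\oplus Y)W^*=X\oplus Y\in\cD_{n+m}$; unitary equivalence then gives $w(X\oplus Y)=W\,w(X\oplus Y)\,W^*$, so $W$ commutes with $w(X\oplus Y)=w(X)\oplus w(Y)$. Reading off the $(1,2)$ block of this commutation yields $Tw(Y)=w(X)T$, the desired intertwining. The main obstacle, and the only genuinely new point compared with the classical similarity argument, is the construction of $W$: one must observe that the Hermitian hypothesis upgrades $XT=TY$ to the symmetric pair $XT=TY$, $T^*X=YT^*$, and that this symmetry is exactly what makes the defect unitary commute with $X\oplus Y$ while keeping the conjugated matrix inside the (generally non--unitarily-invariant) domain $\cD$. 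Checking unitarity of $W$ and the block identities are then routine matrix computations.
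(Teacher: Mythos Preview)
Your proof is correct and follows essentially the same approach as the paper: both use the Julia/defect unitary built from $T$, $D_T$, $D_{T^*}$ after normalizing $\|T\|\le 1$, exploiting that the Hermitian hypothesis gives $T^*X=YT^*$ and hence that the defect operators commute with $X$ and $Y$. The only cosmetic difference is that the paper arranges the blocks as $U_T=\sbm{T & D_{T^*}\\ D_T & -T^*}$, which \emph{swaps} $X\oplus Y$ to $Y\oplus X$, whereas your $W=\sbm{D_{T^*} & T\\ -T^* & D_T}$ \emph{commutes} with $X\oplus Y$; either variant yields the desired block identity.
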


\begin{proof}[\bf{Proof}]
The necessity follows from Proposition 2.1 in \cite{KVV14}. Assume $w$ respects direct sums and unitary equivalence, i.e., $w$ is a real nc function. Let $X\in\cD_n$, $Y\in\cD_m$, and $T_0\in\BC^{n\times m}$ so that $XT_0=T_0 Y$. If $T_0=0$, then it is trivial that $w(X)T_0=T_0w(Y)$, so assume $T_0\neq0$. Set $T=\|T_0\|^{-1} T_0$ so that $\|T\|=1$. Let $D_{T}:=(I-T^\ast T)^{1/2}$ and $D_{T^*}:=(I-T T^\ast)^{1/2}$ be the defect matrices of the contractions $T$ and $T^*$, respectively. Since $X$ and $Y$ are Hermitian we have
\[
T^*X=YT^*.
\]
Therefore
\[
X D_{T^\ast}^2=X(I-T T^\ast)
	=X-T Y T^\ast
	=X-T T^\ast X
	=(I-T T^\ast)X
	=D_{T^\ast}^2 X,
\]
and similarly $Y D_T^2=D_T^2 Y$. By the spectral theorem we have $X D_{T^\ast}=D_{T^\ast} X$ and $Y D_{T}=D_{T} Y$. Let $U_T$ be the unitary rotation matrix associated with $T$:
\[
U_T=\mat{cc}{T& D_{T^\ast}\\D_T&-T^\ast}.
\]
Then
\begin{align*}
\mat{cc}{X&0\\0&Y} U_T
=\mat{cc}{XT& XD_{T^\ast}\\YD_T&-YT^\ast}
=\mat{cc}{TY& D_{T^\ast}X\\D_TY&-T^\ast X}
= U_T \mat{cc}{Y&0\\0&X}.
\end{align*}
Hence
\[
U_T^*\mat{cc}{X&0\\0&Y} U_T=\mat{cc}{Y&0\\0&X}\in\cD_{n+m}.
\]
Since $w$ respects direct sums and unitary similarities, we have that
\begin{align*}
\mat{cc}{w(Y)&0\\0&w(X)}
&=w\left(\mattwo{Y}{0}{0}{X}\right)
=w\left(U_T^\ast\mattwo{X}{0}{0}{Y}U_T\right)\\
&=U_T^\ast w\left(\mattwo{X}{0}{0}{Y}\right) U_T =U_T^\ast\mattwo{w(X)}{0}{0}{w(Y)}U_T.
\end{align*}
This shows that
\begin{align*}
\mat{cc}{w(X)T& w(X) D_{T^\ast}\\w(Y)D_T&-w(Y)T^\ast}
&=\mattwo{w(X)}{0}{0}{w(Y)}U_T=U_T\mattwo{w(Y)}{0}{0}{w(X)}=\\
&=\mat{cc}{Tw(Y)& D_{T^\ast}w(X)\\D_T w(Y)&-T^\ast w(X)}.
\end{align*}
Comparing the left-upper corners in the above identity yields $w(X)T=Tw(Y)$, and thus
\[
w(X)T_0=\|T_0\|w(X)T=\|T_0\|Tw(Y)=T_0 w(Y)
\]
as desired.
\end{proof}

\begin{proof}[\bf Proof of Theorem \ref{T:realNC=NC}]
This is now straightforward. By assumption $w$ is graded and respects direct sums. Let $X\in\cD_n$ and $T\in\BC^{n\times n}$ invertible so that $Y:=T^{-1}X T\in\cD_n$. Then $XT=TY$, and thus $w(X)T=Tw(Y)$ holds by Proposition \ref{P:RealIP}. Therefore, we have
$w(T^{-1}XT)=T^{-1}Tw(Y)=T^{-1} w(X)T$.
\end{proof}

\begin{remark}\label{R:iii'}
Theorem \ref{T:realNC=NC} shows that assumptions (NC-i),(NC-ii) and (RNC-iii) imply (NC-iii), that is:\ For $X\in\cD_{n}$, $S\in\BC^{n\times n}$ invertible so that $S X S^{-1}\in\cD_n$, we have
\[
w(S X S^{-1})=S w(X) S^{-1}.
\]
An important feature here is that $Y:=S X S^{-1}\in\cD_n$ implies, in particular, that $Y$ is Hermitian. In this case, by \cite[Problem 4.1.P3]{HJ13}, $X$ and $Y$ are not only similar, but also unitarily equivalent. In fact, we have $Y=U X U^*$, where $U$ is the unitary matrix from the polar decomposition of $S$. Consequently, we have $w(S X S^{-1})=w(U X U^*)= U w(X) U^*$. However, to arrive at $w(S X S^{-1})= S w(X) S^{-1}$ it still seems necessary to have a result like Proposition \ref{P:RealIP}, at least for the case of positive definite similarities.
\end{remark}

\begin{example}\label{Ex:rnc-simple}
It also follows from Theorem \ref{T:realNC=NC} that real nc functions are only distinguishable from other nc functions by the fact that their domains are contained in $\cH_\tu{nc}^d$ for some positive integer $d$. Simple examples show that the assumption $\cD\subset \cH_\tu{nc}^d$ cannot be removed without Theorem \ref{T:realNC=NC} losing its validity. Any one of the functions
\[
w_1(X)=X^*,\quad w_2(X)=(X^*X)^{\half}
\]
can be defined on $\BC_\tu{nc}$, where they satisfy (NC-i),(NC-ii) and (RNC-iii) but not (NC-iii), hence they are not nc functions on $\BC_\tu{nc}$, but their restrictions to $\cH_\tu{nc}$ are, by Theorem \ref{T:realNC=NC}.
\end{example}

\begin{example}\label{Ex:rnc}
For $d=1$ more intricate examples can easily be constructed. Via the continuous functional calculus, any continuous function $w$ with domain in $\BR$ can be extended to a real nc function on the nc set of Hermitian matrices whose spectrum is contained in the domain of $w$, even when it is not differentiable. Clearly the resulting real nc function is also not differentiable in case $w$ is not.

It is not directly clear how a continuous function of several real variables can be extended to a real nc function, except when the domain is restricted to tuples of commuting matrices. In passing, we note that a (unintentional) non-example is given in \cite{JS18}, where an extension of a function in several real variables to a noncommutative domain is considered, which, after some minor modifications, can be restricted to a nc domain in $\cH_\tu{nc}$, leading to a non-graded function (it maps $\cH_n^d$ to $\cH_{n d}$) which does satisfy conditions (NC-ii) and (NC-iii).
\end{example}

\paragraph{\bf Domain extensions}
Since a real nc function $w$ with domain $\cD$ is a nc function, it follows from Proposition A.3 in \cite{KVV14} that $w$ can be uniquely extended to a nc function, also denoted by $w$, on the similarity invariant envelop of $\cD$:
\[
\cD^{\tu{(si)}}:=\{SXS^{-1} \colon X\in \cD_{n},\ S\in\BC^{n\times n} \mbox{ invertible}\}
\]
via
\[
w(Y)=w(SXS^{-1}):=S w(X) S^{-1}\quad (Y=SXS^{-1}\in \cD^{\tu{(si)}}).
\]
However, in general, $\cD^{\tu{(si)}}$ will not be contained in $\cH_\tu{nc}$, although all matrices in $\cD^{\tu{(si)}}$ have real spectrum only and the only nilpotent matrix in $\cD^{\tu{(si)}}$ is the zero matrix $0$, assuming $0\in\cD$. In the context of real nc functions it may be more natural to consider the extension of $w$ to the unitary equivalence invariant envelop
\[
\cD^{\tu{(ue)}}:=\{UXU^* \colon X\in \cD_{n},\ U\in\BC^{n\times n} \mbox{ unitary}\}=\cD^{\tu{(si)}}\cap \cH_\tu{nc},
\]
with $w$ extended as before. The fact that $\cD^{\tu{(ue)}}=\cD^{\tu{(si)}}\cap \cH_\tu{nc}$ again follows by \cite[Problem 4.1.P3]{HJ13}. As this is just the restriction to $\cD^{\tu{(ue)}}$ of the extension of $w$ to $\cD^{\tu{(si)}}$, clearly we end up with a real nc function extension of $w$ to $\cD^{\tu{(ue)}}$ which is uniquely determined by $w$.

In \cite{PTD17,P18} real free sets (restricted to the case where tensoring is done with the real topological vector space $\mathcal{R}=\BR^d$) are nc sets $\cD\subset \cH_\tu{nc}$ that are closed under unitary equivalence and have the following property:
\begin{itemize}
\item[(a)] For $X,Y\in\cH_\tu{nc}$ we have $X,Y\in \cD$ if and only of $X\oplus Y\in\cD$.
\end{itemize}
One implication is true by the assumption that $\cD$ is a nc set, but the other direction need not be true for the unitary equivalence envelop of a nc set contained in $\cH_\tu{nc}$.

\begin{lemma}\label{L:realfreesets}
Let $\cD\subset\cH_\tu{nc}$ be a nc set. Then the unitary equivalence envelop $\cD^\tu{(ue)}$ of $\cD$ is a real free set if and only if it is closed under injective intertwining:
\[
\mbox{If $X\in\cD^\tu{(ue)}_n$, $Y\in\cH_m$ and $S\in \BC^{n\times m}$ injective so that  $XS=SY$, then $Y\in\cD^\tu{(ue)}$.}
\]
\end{lemma}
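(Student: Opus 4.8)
The plan is to prove the two implications separately, after one reduction. Note first that $\cD^\tu{(ue)}$ is automatically a nc set that is closed under unitary equivalence: its closure under direct sums is inherited from that of $\cD$, since $A=UXU^*$ and $B=U'X'U'^*$ with $X,X'\in\cD$ give $A\oplus B=(U\oplus U')(X\oplus X')(U\oplus U')^*\in\cD^\tu{(ue)}$, and closure under unitary equivalence is built into the envelope. Hence being a real free set amounts to verifying property (a), and since the forward implication $X,Y\in\cD^\tu{(ue)}\Rightarrow X\oplus Y\in\cD^\tu{(ue)}$ is exactly the nc-set property, the only content is the reverse implication: $X\oplus Y\in\cD^\tu{(ue)}$ with $X,Y\in\cH_\tu{nc}$ forces $X,Y\in\cD^\tu{(ue)}$.

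For the implication that injective intertwining yields the real free set property, I would take $W:=X\oplus Y\in\cD^\tu{(ue)}$ of size $n+m$ and use the coordinate embedding $S=\sbm{I_n\\0}\in\BC^{(n+m)\times n}$, which is injective and satisfies $WS=\sbm{X\\0}=SX$. Closure under injective intertwining, applied with $W$ as the element of $\cD^\tu{(ue)}$ and $X$ as the intertwined Hermitian matrix, then gives $X\in\cD^\tu{(ue)}$, and symmetrically $Y\in\cD^\tu{(ue)}$ via the embedding $\sbm{0\\I_m}$. This is the routine direction.

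For the converse I would assume $\cD^\tu{(ue)}$ is a real free set and take $X\in\cD^\tu{(ue)}_n$, $Y\in\cH_m$, and $S\in\BC^{n\times m}$ injective with $XS=SY$. The key structural observation is that $\cM:=\ran S$ is $X$-invariant, because $X\cM=\ran(SY)\subseteq\ran S=\cM$, and therefore \emph{reducing}, since $X$ is Hermitian. Factoring $S=VR$ with $R:=(S^*S)^{1/2}$ invertible and $V\in\BC^{n\times m}$ an isometry onto $\cM$, and completing $V$ to a unitary $W=[\,V\ V'\,]$, block-diagonalizes $X$ as $W^*XW=X_1\oplus X_2$ with $X_1=V^*XV\in\cH_m$. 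Unitary invariance gives $X_1\oplus X_2\in\cD^\tu{(ue)}$, and property (a) yields $X_1\in\cD^\tu{(ue)}$. Finally, from $XV=VX_1$ and $XS=SY$ with $S=VR$ I would deduce $VX_1R=VRY$, hence $X_1R=RY$, so that $Y=R^{-1}X_1R$ is similar to the Hermitian matrix $X_1$; by \cite[Problem 4.1.P3]{HJ13} the two are then unitarily equivalent, and closure under unitary equivalence gives $Y\in\cD^\tu{(ue)}$.

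The main obstacle is the converse direction, and its heart is the observation that injective intertwining by $S$ makes $\ran S$ not merely invariant but reducing for the Hermitian matrix $X$, together with the passage from similarity to unitary equivalence for the Hermitian pair $Y$ and $X_1$. Once the reducing-subspace structure is in hand, the factorization $S=VR$ and the block-diagonalization are bookkeeping.
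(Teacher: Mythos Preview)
Your proof is correct and follows essentially the same approach as the paper's. The only cosmetic difference is that the paper, in the converse direction, replaces $X$ by a unitary conjugate up front so that $S=\sbm{S_1\\0}$ with $S_1$ invertible, whereas you carry out the equivalent polar decomposition $S=VR$ explicitly; both routes hinge on the same two observations---that $\ran S$ is reducing for the Hermitian $X$, and that similar Hermitian matrices are unitarily equivalent via \cite[Problem 4.1.P3]{HJ13}.
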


\begin{proof}[\bf Proof]
Assume $\cD^\tu{(ue)}$ is closed under injective intertwining. Since $\cD$ is a nc set, so is $\cD^\tu{(ue)}$, by \cite[Proposition A.1]{KVV14}. Hence it remains to show that for $X,Y\in\cH_\tu{nc}$ with $X\oplus Y\in\cD$ also $X,Y\in\cD$. This follows by taking $S=S_1=\sbm{I\\0}$ and $S=S_2=\sbm{0\\I}$, respectively, with sizes compatible with the decomposition of $X\oplus Y$. Indeed, clearly $S_1$ and $S_2$ are injective and we have $(X\oplus Y)S_1=S_1 X$ and $(X\oplus Y)S_2=S_2 Y$. Thus $\cD^\tu{(ue)}$ is a nc set in $\cH_\tu{nc}$ which is closed under unitary equivalence and satisfies (a), hence it is a real free set.

For the converse direction, assume $\cD^\tu{(ue)}$ is  a real free set. Take $X\in\cD^\tu{(ue)}_n$, $Y\in\cH_m$ and $S\in \BC^{n\times m}$ injective so that  $XS=SY$. Since $\cD^\tu{(ue)}$ is closed under unitary equivalence and $S$ is injective, without loss of generality $S=\sbm{S_1\\ 0}$ with $S_1$ invertible. Then $XS=SY$ implies $\tu{Ran}(S)$ is invariant for $X$. However, $X$ is Hermitian, so that $\tu{Ran}(S)$ is in fact a reducing subspace for $X$. Hence $X=X_1\oplus X_2$ with respect to the same decomposition as for $S$. Then property (a) implies $X_1\in\cH_m$ is in $\cD^\tu{(ue)}$, and $XS=SY$ yields $X_1 S_1=S_1 Y$, i.e., $X_1 =S_1 Y S_1^{-1}$. Hence $X_1$ and $Y$ are similar. Since $X_1$ and $Y$ are Hermitian, $X_1$ and $Y$ are also  unitarily equivalent, by \cite[Problem 4.1.P3]{HJ13}. Hence $Y$ is in $\cD^\tu{(ue)}$.
\end{proof}


\section{Differentiability of nc functions}\label{S:DirDer}

For differentiation of vector-valued functions several notions exist, and these may differ for real and complex vector spaces. We refer to Section III.3 in \cite{HP57}, Section 5.3 in \cite{AH01} and Sections 2.3 and 2.4 in \cite{P13} for elaborate treatments, often at a much higher level of generality than required here. In this paper we will only encounter G\^{a}teaux (G-)differentiability and Fr\'{e}chet (F-)differentiability. In the context of nc functions over complex Banach spaces these notions are discussed in Chapter 7 of \cite{KVV14}, with a few remarks dedicated to the case of real Banach spaces. Here we will restrict to the case of nc functions on finite dimensional spaces, i.e., with domains in $\BC_\tu{nc}^d$ and $\cH_\tu{nc}^d$ with $d$ finite, as we do throughout the paper.

We start with the definitions of G-differentiability and F-differentiability, not distinguishing whether the field $\BK$ we
work over is $\BK=\BR$ or $\BK=\BC$, where in the case of $\BK=\BR$ we consider nc functions with domains contained in $\cH_\tu{nc}^d$ and for $\BK=\BC$ the nc functions are assumed to have a domain in $\BC_\tu{nc}^d$. Now let $w$ be a nc function defined on an open domain $\cD$ in $\cH_\tu{nc}^d$ (for $\BK=\BR$) or in $\BC_\tu{nc}^d$ (for $\BK=\BC$). Then for each $X\in\cD_{n}$ and $n\times n$ matrix $Z$ (in $\cH_n^d$ for $\BK=\BR$) we say $w$ is G-differentiable at $X$ in direction $Z$ in case the limit
\begin{equation}\label{G-diff}
Dw(X)(Z):=\lim_{\BK\ni t\to 0}\frac{w(X+tZ)-w(X)}{t}=\frac{d}{dt}w(X+tZ)\bigg|_{t=0}
\end{equation}
exists. In that case $Dw(X)(Z)$ is the G-derivative of $w$ at $X$ in direction $Z$. We say that $w$ is  G-differentiable in $X$ if $w$ is  G-differentiable at $X$ in each direction $Z$, and $w$ is called G-differentiable if it is G-differentiable in any $X\in\cD_w$. If $w$ is G-differentiable at $X\in\cD$, then the map $Z\mapsto Dw(X)(Z)$ is linear in $Z$. We shall usually refer to $Z$ as the {\em directional variable}.

Following \cite{HP57}, we say that the nc function $w$ is F-differentiable in $X\in\cD$ in case $w$ is G-differentiable in $X$ and the G-derivative $Dw(X)$ at $X$ satisfies
\begin{equation}\label{F-diff}
\lim_{\|Z\|\to 0} \frac{\|w(X+Z)-w(X)-Dw(X)(Z)\|}{\|Z\|} =0.
\end{equation}
Here, and in the sequel, the norm $\|Z\|$ for $Z=(Z^{(1)},\ldots,Z^{(d)})$ in $\cH_\tu{nc}^d$ or $\BC_\tu{nc}^d$ is given by $\|Z\|=\max_{k} \|Z^{(k)}\|$. Note that if for $X\in\cD$ there exists a homogeneous map $Z\mapsto Dw(X)(Z)$ that satisfies \eqref{F-diff}, then it must satisfy \eqref{G-diff}, so that $w$ is G-differentiable, and hence $D_w(X)(Z)$ is in fact linear in the directional variable $Z$. Hence, existence of a homogeneous map $Z\mapsto Dw(X)(Z)$ satisfying \eqref{F-diff} can be used as another definition of F-differentiability. Even in case $w$ is F-differentiable, we will refer to \eqref{G-diff} as the G-derivative of $w$.\medskip

\paragraph{\bf The case $\cD\subset \BC_\tu{nc}^d$ ($\BK=\BC$)} This case is discussed in detail in Chapter 7 of \cite{KVV14}. We just mention a few specific results relevant to the present paper and to illustrate the contrast with the case of real nc functions. Since the domain $\cD$ of $w$ is assumed to be open in $\BC_\tu{nc}^d$ it must be right-admissible and hence the difference-differential operator $\Delta w(X,Y)(Z)$ defined via \eqref{DDoperator} exists for all $X\in\cD_n$, $Y\in\cD_m$ and $Z\in (\BC^{n\times m})^d$.

By Theorem 7.2 in \cite{KVV14}, $w$ is G-differentiable in case $w$ is {\em locally bounded on slices}, that is, if for any $n$, $X\in \cD_n$ and any $Z\in\BC^{n\times n}$ there exists a $\vep>0$ so that $t\mapsto w(X+tZ)$ is bounded for $|t|<\vep$. Moreover, in that case we have $Dw(X)(Z)=\Delta w(X,X)(Z)$, and hence the G-derivative can be determined algebraically by evaluating $w$ in $\sbm{X& r Z\\ 0 & X}$ for small $r$. Furthermore, by Theorem 7.4 in \cite{KVV14}, $w$ is F-differentiable in case $w$ is {\em locally bounded}, that is, if for any $n$, $X\in \cD_n$ there exists a $\delta>0$ so that $w$ is bounded on the set of $Y\in \cD_n$ with $\|X-Y\|<\delta$. However, since we only consider the case of finite dimensional vector spaces, for $X\in\cD_n$ the linear map $Z\mapsto Dw(X)(Z)$ from $(\BC^{n\times n})^d$ to $\BC^{n\times n}$ is continuous, hence G-differentiability and F-differentiability coincide, by a result of Zorn \cite{Z46}.\medskip

\paragraph{\bf The case $\cD\subset \cH_\tu{nc}^d$ ($\BK=\BR$)}
The domain of real nc functions are `nowhere right admissible', hence one cannot in general define the difference-differential operator $\Delta w$ of a real nc function $w$ in the way it is done for nc functions defined on a right admissible nc set. Nonetheless, Proposition 2.5 in \cite{PTD17} provides a difference formula for real nc functions, provided they are F-differentiable.

As pointed out in Example \ref{Ex:rnc}, for $d=1$ any continuous function with domain in $\BR$ can be extended to a real nc function. Clearly G- or F-differentiability will not follow under local boundedness properties; consider, for instance, the function $w_2$ in Example \ref{Ex:rnc-simple}. The theory of G- and F-differentiability for functions between real Banach spaces is treated in Section 5.3 in \cite{AH01} and Sections 2.3 and 2.4 in \cite{P13}. It is not the case here that G- and F-differentiability coincide. By Proposition 5.3.4 in \cite{AH01} or Proposition 2.51 in \cite{P13}, a sufficient condition under which G-differentiability at a point $X\in\cD_n$ implies F-differentiability at $X$ is that the map $Y\mapsto Dw(Y)$ from $\cD_n$ into the space of linear operators from $\cH_n^d$ to $\cH_n$ is continuous at $X$. Even if $w$ is F-differentiable, there does not appear to be a general way to determine $Dw$ algebraically, since there is no difference-differential operator.

The formula presented in the next proposition can be seen as complementary to the difference formula in \cite[Proposition 2.5]{PTD17}.

\begin{proposition}\label{P:DirDerBlockForm}
Let $w:\cD \to\cH_\text{nc}$ be a G-differentiable real nc function on an open domain $\cD\subset \cH_\tu{nc}^d$. For $X\in\cD_n$ and $Z\in\cH_n^d$, with $n$ arbitrary, we have
\[
Dw\left(\mattwo{X}{0}{0}{X}\right)\left(\mattwo{0}{Z}{Z}{0}\right)=\mattwo{0}{Dw(X)(Z)}{Dw(X)(Z)}{0}.
\]
\end{proposition}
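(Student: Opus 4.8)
The plan is to reduce the left-hand side, which is a G-derivative of $w$ at the doubled point $X\oplus X$ in the Hermitian direction $\sbm{0&Z\\Z&0}$, to the ordinary G-derivative $Dw(X)(Z)$ by a single unitary change of variables that diagonalises the perturbed argument. First I would check that every point involved is admissible: since $\cD$ respects direct sums, $X\oplus X\in\cD_{2n}$; the direction $\sbm{0&Z\\Z&0}$ lies in $\cH_{2n}^d$ because each $Z^{(k)}$ is Hermitian; and since $\cD$ is open there is $\vep>0$ so that for all real $t$ with $|t|<\vep$ both $X\pm tZ\in\cD_n$ and $\mattwo{X}{0}{0}{X}+t\sbm{0&Z\\Z&0}\in\cD_{2n}$.

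The crux is the self-adjoint unitary involution $U=\frac{1}{\sqrt2}\mattwo{I}{I}{I}{-I}\in\BC^{2n\times2n}$, applied in each of the $d$ coordinates. A direct block multiplication gives the diagonalising identity
\[
U\big[(X+tZ)\oplus(X-tZ)\big]U^*=\mattwo{X}{0}{0}{X}+t\mattwo{0}{Z}{Z}{0}.
\]
Because $w$ is a real nc function it respects both direct sums and unitary equivalence, and both $(X+tZ)\oplus(X-tZ)$ and its $U$-conjugate lie in $\cD_{2n}$ for $|t|<\vep$; writing $P=w(X+tZ)$ and $Q=w(X-tZ)$, applying these two properties yields
\[
w\!\left(\mattwo{X}{0}{0}{X}+t\mattwo{0}{Z}{Z}{0}\right)=U\,(P\oplus Q)\,U^*=\half\mattwo{P+Q}{P-Q}{P-Q}{P+Q}.
\]

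Subtracting $w(X\oplus X)=w(X)\oplus w(X)$ and dividing by $t$, the off-diagonal blocks of the difference quotient equal $\half\big(\tfrac{P-w(X)}{t}-\tfrac{Q-w(X)}{t}\big)$, which converge to $\half\big(Dw(X)(Z)-Dw(X)(-Z)\big)=Dw(X)(Z)$ by G-differentiability of $w$ at $X$ together with the linearity of $Z\mapsto Dw(X)(Z)$; the diagonal blocks equal $\half$ times the corresponding \emph{sum} of the same two quotients and converge to $\half\big(Dw(X)(Z)+Dw(X)(-Z)\big)=0$. Letting $t\to0$ therefore produces exactly $\mattwo{0}{Dw(X)(Z)}{Dw(X)(Z)}{0}$, as claimed.

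I do not expect a genuine obstacle here: once the diagonalising identity for $U$ is in hand, the real nc hypotheses do all the structural work and only an elementary symmetric-difference limit remains. The single point requiring care is domain membership — one must invoke openness of $\cD$ to keep $X\pm tZ$ and the doubled perturbation inside the domain for small $t$, which is what legitimises the direct-sum and unitary-equivalence steps and simultaneously guarantees that $Dw(X)(Z)$ exists, so that the right-hand side is meaningful.
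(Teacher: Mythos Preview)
Your proof is correct and follows essentially the same approach as the paper's: both use the self-adjoint unitary $U=V=\tfrac{1}{\sqrt2}\sbm{I&I\\I&-I}$ to diagonalise the perturbation, apply the direct-sum and unitary-equivalence properties of $w$, and then pass to the limit in the difference quotient. The only cosmetic difference is that you invoke linearity of $Z\mapsto Dw(X)(Z)$ to handle the $Dw(X)(-Z)$ term, whereas the paper recognises $\tfrac{w(X-tZ)-w(X)}{-t}\to Dw(X)(Z)$ directly via the substitution $s=-t$.
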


\begin{proof}[\bf{Proof}]
Note that
\[
V^*\mattwo{X+tZ}{0}{0}{X-tZ}V=\mattwo{X}{tZ}{tZ}{X},\quad\mbox{where}\quad t\in\BR,\ V=\frac{1}{\sqrt{2}}\mattwo{I}{I}{I}{-I}.
\]
Since $\cD$ is open and $X\in\cD$, for small $t$ both $2 \times 2$ block matrices are in $\cD$, and we have
\begin{align*}
&w\left(\mattwo{X}{tZ}{tZ}{X}\right)
=w\left(V^*\mattwo{X+tZ}{0}{0}{X-tZ}V\right)\\
&\qquad=V^*\mattwo{w(X+tZ)}{0}{0}{w(X-tZ)}V\\
&\qquad=\frac{1}{2}\mattwo{w(X+tZ)+w(X-tZ)}{w(X+tZ)-w(X-tZ)}{w(X+tZ)-w(X-tZ)}{w(X+tZ)+w(X-tZ)}.
\end{align*}
Using this formula we obtain
\begin{align*}
&Dw\left(\mattwo{X}{0}{0}{X}\right)\left(\mattwo{0}{Z}{Z}{0}\right)
=\lim_{t\to 0}\frac{w\left(\mattwo{X}{tZ}{tZ}{X}\right)-w\left(\mattwo{X}{0}{0}{X}\right)}{t}\\[.2cm]
&\quad=\frac{1}{2} \lim_{t\to0}  \mat{cc}{\frac{w(X+tZ)+w(X-tZ)-2w(X)}{t}&\frac{w(X+tZ)-w(X-tZ)}{t}\\[.2cm] \frac{w(X+tZ)-w(X-tZ)}{t}&\frac{w(X+tZ)+w(X-tZ)-2w(X)}{t}}\\[.2cm]
&\quad=\frac{1}{2}\lim_{t\to 0} \mat{cc}
{\frac{w(X+tZ)-w(X)}{t}-\frac{w(X-tZ)-w(X)}{-t} & \frac{w(X+tZ)-w(X)}{t}+\frac{w(X-tZ)-w(X)}{-t}\\[.2cm]
\frac{w(X+tZ)-w(X)}{t}+\frac{w(X-tZ)-w(X)}{-t} & \frac{w(X+tZ)-w(X)}{t}-\frac{w(X-tZ)-w(X)}{-t}}\\[.2cm]
&\quad=\frac{1}{2}\mattwo{Dw(X)(Z)-Dw(X)(Z)}{Dw(X)(Z)+Dw(X)(Z)}{Dw(X)(Z)+Dw(X)(Z)}{Dw(X)(Z)-Dw(X)(Z)}\\
&\quad=\mattwo{0}{Dw(X)(Z)}{Dw(X)(Z)}{0}.\qedhere
\end{align*}	
\end{proof}

\section{Real and complex part of a nc function}\label{S:RealComplex}

Throughout this section, let $f$ be a nc function with domain $\cD_f\subset \BC^d_\tu{nc}$. As in the introduction,
we define the real and imaginary parts of $f$ as
\begin{equation}\label{uvDef1}
\begin{aligned}
& u:\cD_u\to\cH_\tu{nc},\quad
u:\cD_v\to\cH_\tu{nc},\\
&\mbox{with } \cD_u=\cD_v=\cD:=\coprod_{n=1}^\infty \{(A,B)\colon A,B\in\cH_n^d,\ A+iB\in\cD_f\}\subset \cH_\tu{nc}^{2d},
\end{aligned}
\end{equation}
with $u$ and $v$ defined for $(A,B)\in\cD$ by
\begin{equation}\label{uvDef2}
\begin{aligned}
u(A,B)&:=\re f(A+iB)=\frac{1}{2}(f(A+iB)+f(A+iB)^\ast),\\
v(A,B)&:=\im f(A+iB)=\frac{1}{2i}(f(A+iB)-f(A+iB)^\ast).
\end{aligned}
\end{equation}
In particular, $u$, $v$ and $f$ satisfy \eqref{fdec}. The following theorem is the main result of this section.

\begin{theorem}\label{T:RCpart}
Let $f$ be a G-differentiable nc function defined on an open nc set $\cD_f\subset \BC^d_\tu{nc}$ and define $u$ and $v$ as in \eqref{uvDef1} and \eqref{uvDef2}. Then $u$ and $v$ are G-differentiable real nc functions, whose G-derivatives at $(A,B)\in\cD_{n}$ in direction $Z=(Z_1,Z_2)\in\cH_{n}^{2d}$, for any $n$, are given by
\begin{equation}\label{DuDv}
\begin{aligned}
Du(A,B)(Z_1,Z_2)&=\re Df(A+iB)(Z_1+iZ_2),\\
Dv(A,B)(Z_1,Z_2)&=\im Df(A+iB)(Z_1+iZ_2),
\end{aligned}
\end{equation}
and $Du$ and $Dv$ satisfy the nc Cauchy-Riemann equations:
\begin{equation}\label{CR-nec}
Du(A,B)(Z_1,Z_2)=Dv(A,B)(-Z_2,Z_1),\ (A,B)\in \cD_{n},\, Z_1,Z_2\in\cH_n,\, n\in\BN.
\end{equation}
Finally, if $f$ is F-differentiable, then $u$ and $v$ are F-differentiable as well.
\end{theorem}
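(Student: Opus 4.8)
The plan is to verify the claims in four stages: the algebraic real-nc-function axioms for $u$ and $v$, then the G-derivative formulas \eqref{DuDv}, then the Cauchy--Riemann equations \eqref{CR-nec}, and finally the passage from F-differentiability of $f$ to that of $u$ and $v$. First I would check that $u$ and $v$ are real nc functions, which is purely algebraic. Gradedness is immediate, since $\re C=\frac12(C+C^*)$ and $\im C=\frac1{2i}(C-C^*)$ are Hermitian for every $C\in\BC^{n\times n}$ and $f$ is graded. For (NC-ii) one uses $(A+iB)\oplus(A'+iB')=(A\oplus A')+i(B\oplus B')$ together with the fact that $\re$ and $\im$ commute with $\oplus$ and that $f$ respects direct sums. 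For (RNC-iii) one uses $U(A+iB)U^*=UAU^*+iUBU^*$, the identities $\re(UCU^*)=U(\re C)U^*$ and $\im(UCU^*)=U(\im C)U^*$, and the fact that the nc function $f$ respects unitary equivalence.

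Next I would compute the G-derivatives directly from \eqref{G-diff}. Fix $(A,B)\in\cD_n$ and a real direction $Z=(Z_1,Z_2)\in\cH_n^{2d}$, and set $W:=Z_1+iZ_2\in(\BC^{n\times n})^d$. For real $t$ we have $u((A,B)+tZ)=\re f(A+iB+tW)$, and since $\cD_f$ is open the argument stays in $\cD_f$ for small $t$. Thus the real difference quotient of $u$ equals $\re$ applied to the difference quotient of $f$ along the \emph{real} parameter $t$. Because $f$ is complex G-differentiable, its difference quotient converges as $\BC\ni t\to0$; restricting to $t\in\BR$ yields the same limit $Df(A+iB)(W)$. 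As $\re$ is continuous and $\BR$-linear and $t$ is real, it commutes with this limit, giving $Du(A,B)(Z_1,Z_2)=\re Df(A+iB)(W)$, and similarly for $v$. This simultaneously establishes G-differentiability of $u,v$ and proves \eqref{DuDv}.

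For the Cauchy--Riemann equations the key input is that $Df(A+iB)$ is $\BC$-homogeneous in its directional argument, which follows directly from \eqref{G-diff} by substituting a complex $t$: $Df(X)(\lambda W)=\lambda\,Df(X)(W)$ for $\lambda\in\BC$. Taking $\lambda=i$ and using $-Z_2+iZ_1=i(Z_1+iZ_2)=iW$ gives
\[
Dv(A,B)(-Z_2,Z_1)=\im Df(A+iB)(iW)=\im\big(i\,Df(A+iB)(W)\big).
\]
Finally, $\im(iC)=\re C$ for every $C\in\BC^{n\times n}$ (immediate from the definitions of $\re$ and $\im$), so the right-hand side equals $\re Df(A+iB)(W)=Du(A,B)(Z_1,Z_2)$, which is \eqref{CR-nec}.

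For F-differentiability I would set $r(W):=f(A+iB+W)-f(A+iB)-Df(A+iB)(W)$. The map $(Z_1,Z_2)\mapsto W=Z_1+iZ_2$ is an $\BR$-linear bijection of $\cH_n^{2d}$ onto $(\BC^{n\times n})^d$ satisfying $\|Z\|\le\|W\|\le 2\|Z\|$, so $\|Z\|\to0$ if and only if $\|W\|\to0$. Since $u((A,B)+Z)-u(A,B)-Du(A,B)(Z)=\re r(W)$ and $\|\re r(W)\|\le\|r(W)\|$, we get
\[
\frac{\|\re r(W)\|}{\|Z\|}\le\frac{\|r(W)\|}{\|W\|}\cdot\frac{\|W\|}{\|Z\|}\le 2\,\frac{\|r(W)\|}{\|W\|}\longrightarrow 0
\]
as $\|Z\|\to0$, using F-differentiability of $f$; the same estimate with $\im$ in place of $\re$ handles $v$. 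The computations are otherwise routine, and the only genuine subtlety I anticipate is making precise that the complex G-derivative of $f$, restricted to real increments, agrees with the full complex derivative, and that its $\BC$-homogeneity (the place where the analyticity of $f$ is actually used) is exactly what converts the trivial identity $\im(iC)=\re C$ into the Cauchy--Riemann relation.
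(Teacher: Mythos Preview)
Your proposal is correct and follows essentially the same four-part structure as the paper's proof: verifying the real nc axioms, computing the G-derivatives by splitting the real-parameter difference quotient of $f$ into real and imaginary parts, deriving the Cauchy--Riemann equations from the $\BC$-homogeneity of $Df(X)$, and transferring F-differentiability via the norm comparison $\|(Z_1,Z_2)\|\le\|Z_1+iZ_2\|\le 2\|(Z_1,Z_2)\|$ (the paper's Lemma~\ref{L:normcomp}). The only cosmetic difference is in the Cauchy--Riemann step: the paper recomputes the limit of $(f(X+ihZ)-f(X))/(ih)$ from scratch and compares, whereas you invoke $\BC$-homogeneity of the already-established derivative and the identity $\im(iC)=\re C$; these are the same argument presented in different order.
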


In order to prove this result we first prove a lemma that will also be useful in the sequel. The result may be well-known, but we could not find it in the literature, hence we add a proof for completeness.

\begin{lemma}\label{L:normcomp}
For $Z=Z_1+iZ_2 \in \BC_\tu{nc}^d$ with $Z_1,Z_2\in\cH_n^d$ we have
\begin{equation}\label{NormIneqs}
\|(Z_1,Z_2)\|_{\cH_n^{2d}} \leq \|Z_1+iZ_2\|_{(\BC^{n\times n})^d}\leq 2 \|(Z_1,Z_2)\|_{\cH_n^{2d}}.
\end{equation}
\end{lemma}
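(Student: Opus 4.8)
The plan is to reduce the stated inequality to a purely componentwise statement and then dispatch it with the triangle inequality. Since the norm on $\cH_\tu{nc}^{2d}$ and on $\BC_\tu{nc}^d$ is the maximum over the entries of the tuple, I would first rewrite both sides as
\[
\|(Z_1,Z_2)\|_{\cH_n^{2d}}=\max_{1\le k\le d}\max\bigl\{\|Z_1^{(k)}\|,\|Z_2^{(k)}\|\bigr\},
\qquad
\|Z_1+iZ_2\|_{(\BC^{n\times n})^d}=\max_{1\le k\le d}\|Z_1^{(k)}+iZ_2^{(k)}\|.
\]
Because the maximum is monotone, it suffices to prove, for each fixed index $k$, the two inequalities
\[
\max\bigl\{\|Z_1^{(k)}\|,\|Z_2^{(k)}\|\bigr\}\le \|Z_1^{(k)}+iZ_2^{(k)}\|\le 2\max\bigl\{\|Z_1^{(k)}\|,\|Z_2^{(k)}\|\bigr\}.
\]
Thus I would reduce to the case of a single pair of Hermitian matrices; set $A=Z_1^{(k)}$, $B=Z_2^{(k)}$ and $C=A+iB$.

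The key algebraic input is that $A$ and $B$ are Hermitian, so that $C^*=A-iB$ and hence
\[
A=\tfrac12(C+C^*),\qquad B=\tfrac{1}{2i}(C-C^*),\qquad \|C^*\|=\|C\|.
\]
For the lower bound I would apply the triangle inequality to the first two identities, using $\|C^*\|=\|C\|$:
\[
\|A\|\le \tfrac12(\|C\|+\|C^*\|)=\|C\|,\qquad \|B\|\le \tfrac12(\|C\|+\|C^*\|)=\|C\|,
\]
so that $\max\{\|A\|,\|B\|\}\le\|C\|$. For the upper bound I would use the triangle inequality directly on $C=A+iB$ together with $\|iB\|=\|B\|$ to get $\|C\|\le\|A\|+\|B\|\le 2\max\{\|A\|,\|B\|\}$.

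Finally I would reassemble: taking the maximum over $k$ in the componentwise inequalities recovers \eqref{NormIneqs}. There is no serious obstacle here; the only points requiring a moment's care are the reduction through the max-norm structure and the observation that $\|C^*\|=\|C\|$ for the operator norm, which is precisely what makes both bounds immediate.
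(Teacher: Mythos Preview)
Your proof is correct. The componentwise reduction via the max-norm is legitimate (if $a_k\le b_k$ for every $k$ then $\max_k a_k\le\max_k b_k$), and for a single pair $A,B\in\cH_n$ the identities $A=\tfrac12(C+C^*)$, $B=\tfrac1{2i}(C-C^*)$ together with $\|C^*\|=\|C\|$ give the lower bound by the triangle inequality, while $\|C\|\le\|A\|+\|B\|\le 2\max\{\|A\|,\|B\|\}$ gives the upper bound.

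This is a genuinely different and more elementary route than the paper's argument. The paper works with the positive-semidefinite ordering: from $\de^2 I_n\ge Z^*Z=Z_1^2+Z_2^2+[iZ_1,Z_2]$ and $\de^2 I_n\ge ZZ^*=Z_1^2+Z_2^2-[iZ_1,Z_2]$ it averages to obtain $\de^2 I_n\ge Z_1^2+Z_2^2$, whence $\|Z_j\|\le\de$; for the upper bound it estimates the commutator $\|[iZ_1,Z_2]\|\le 2\rho^2$ and combines with $Z_j^2\le\rho^2 I_n$ to get $Z^*Z\le 4\rho^2 I_n$. Your argument bypasses the order-theoretic machinery and the commutator altogether, using only the operator-norm identities $\|C^*\|=\|C\|$ and $\|iB\|=\|B\|$ plus the triangle inequality. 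The paper's approach has the minor advantage of being formulated directly at the tuple level (no explicit componentwise reduction), and its first step actually yields the slightly sharper intermediate fact $Z_1^2+Z_2^2\le\de^2 I_n$; your approach is shorter and uses less structure.
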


\begin{proof}[\bf Proof]
Set $\de=\|Z_1+iZ_2\|=\|Z\|$, $\rho=\|(Z_1,Z_2)\|=\max\{\|Z_1\|,\|Z_2\|\}$. Then
\[
\de^2 I_n\geq Z^*Z =Z_1^2+Z_2^2 +[iZ_1,Z_2] \ands
\de^2 I_n\geq ZZ^* =Z_1^2+Z_2^2 -[iZ_1,Z_2].
\]
Here $[T_1,T_2]$ denotes the commutator of the square matrices $T_1$, $T_2$, i.e., $[T_1,T_2]=T_1T_2-T_2T_1$, which is applied entrywise in case $T_1$ and $T_2$ are tuples of matrices of the same size. Taking the average of the above two inequalities gives
\[
\de^2 I_n\geq Z_1^2+Z_2^2.
\]
Hence $Z_j^2\leq \de^2 I_n$, or equivalently, $\|Z_j\|\leq \de$ for both $j=1,2$. Therefore, we have $\|(Z_1,Z_2)\|\leq \de=\|Z_1+iZ_2\|$. For the second inequality, note that $Z_j^2\leq \rho^2 I_n $ for $j=1,2$.  Also, we have
\[
\|[i Z_1,Z_2]\|=\|Z_1Z_2-Z_2Z_1\|\leq 2\|Z_1\|\|Z_2\|\leq 2\rho^2.
\]
This implies $-2\rho^2 I_n \leq [iZ_1,Z_2] \leq 2 \rho^2 I_n$, since $[iZ_1,Z_2]\in\cH_n$. We then obtain
\[
0\leq Z^*Z=Z_1^2+Z_2^2 +[iZ_1,Z_2]\leq 4 \rho^2 I_n,
\]
so that $\|Z\|\leq 2\rho=2 \|(Z_1,Z_2)\|$.
\end{proof}

Since the inequalities in \eqref{NormIneq} provide a comparison between the norms in $\BC_\tu{nc}^d$ and $\cH_\tu{nc}^{2d}$, the following corollary is immediate.

\begin{corollary}
The nc set $\cD_f$ is open if and only if $\cD$ is open.
\end{corollary}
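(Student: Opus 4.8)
The plan is to exhibit, for each fixed $n$, a homeomorphism between $\cH_n^{2d}$ and $(\BC^{n\times n})^d$ under which $\cD_n$ and $(\cD_f)_n$ correspond, and then to invoke the level-wise definition of openness.

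First I would introduce the real-linear map $\phi_n\colon \cH_n^{2d}\to(\BC^{n\times n})^d$ given by $\phi_n(A,B)=A+iB$. This is a bijection: its inverse sends $X$ to $(\re X,\im X)=\left(\half(X+X^*),\tfrac{1}{2i}(X-X^*)\right)$, and one checks that these two matrices are Hermitian and recombine to $X$. By the very definition of $\cD$ in \eqref{uvDef1} we have $(\cD_f)_n=\phi_n(\cD_n)$ for every $n$. Recall that $\cD_f$ being open means each slice $(\cD_f)_n$ is open in $(\BC^{n\times n})^d$, and likewise $\cD$ being open means each $\cD_n$ is open in $\cH_n^{2d}$; so it suffices to prove, for each fixed $n$, that $\cD_n$ is open if and only if $\phi_n(\cD_n)$ is open.

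Next I would apply Lemma \ref{L:normcomp} to differences. For $(A,B),(A',B')\in\cH_n^{2d}$, setting $Z_1=A-A'$ and $Z_2=B-B'$ and feeding $Z_1+iZ_2$ into \eqref{NormIneqs} gives
\[
\|(A,B)-(A',B')\|\le \|\phi_n(A,B)-\phi_n(A',B')\|\le 2\,\|(A,B)-(A',B')\|.
\]
Thus $\phi_n$ and $\phi_n^{-1}$ are both Lipschitz, so $\phi_n$ is a bi-Lipschitz homeomorphism. A homeomorphism carries open sets to open sets and takes open preimages of open sets, so $\cD_n$ is open in $\cH_n^{2d}$ precisely when $\phi_n(\cD_n)=(\cD_f)_n$ is open in $(\BC^{n\times n})^d$. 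Taking the conjunction over all $n\in\BN$ then yields the equivalence claimed in the corollary.

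I do not anticipate a genuine obstacle here; the only point requiring a moment's care is the two-sided nature of the norm comparison. A one-sided bound would only give continuity of $\phi_n$ (or of its inverse), and hence an implication in a single direction; it is precisely the symmetric estimate in \eqref{NormIneqs}, valid uniformly in $n$, that upgrades $\phi_n$ to a homeomorphism and delivers the biconditional.
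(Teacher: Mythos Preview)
Your proof is correct and follows essentially the same approach as the paper: the paper simply states that the corollary is immediate from the norm comparison in Lemma~\ref{L:normcomp}, and you have spelled out precisely why, by exhibiting the level-wise map $(A,B)\mapsto A+iB$ as a bi-Lipschitz homeomorphism carrying $\cD_n$ to $(\cD_f)_n$. Your added remark that the two-sided inequality is what yields the biconditional is apt and makes the argument self-contained.
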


By applying the inequalities of Lemma \ref{L:normcomp} to both the denominator and numerator, we obtain the following corollary.

\begin{corollary}\label{C:normIneq}
Let $Z_1,Z_2\in\cH_n^d$ and $T_1,T_2\in\cH_m^k$. Then
\begin{equation}\label{NormIneq}
\frac{1}{2}\,\frac{\|(T_1,T_2)\|}{\|(Z_1,Z_2)\|}\leq \frac{\|T_1+iT_2\|}{\|Z_1+iZ_2\|} \leq 2 \frac{\|(T_1,T_2)\|}{\|(Z_1,Z_2)\|}.\medskip
\end{equation}
\end{corollary}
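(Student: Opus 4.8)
The plan is to reduce everything to Lemma \ref{L:normcomp}, applied separately to the numerator and to the denominator, and then divide the resulting two-sided estimates. First I would invoke Lemma \ref{L:normcomp} for the tuple $(T_1,T_2)\in\cH_m^{2k}$ (taking the ambient size to be $m$ and the number of variables to be $k$), which gives
\[
\|(T_1,T_2)\|\leq \|T_1+iT_2\|\leq 2\|(T_1,T_2)\|.
\]
Applying the same lemma to $(Z_1,Z_2)\in\cH_n^{2d}$ yields
\[
\|(Z_1,Z_2)\|\leq \|Z_1+iZ_2\|\leq 2\|(Z_1,Z_2)\|.
\]

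For the upper bound in \eqref{NormIneq} I would combine the upper estimate $\|T_1+iT_2\|\leq 2\|(T_1,T_2)\|$ for the numerator with the lower estimate $\|Z_1+iZ_2\|\geq \|(Z_1,Z_2)\|$ for the denominator, so that
\[
\frac{\|T_1+iT_2\|}{\|Z_1+iZ_2\|}\leq \frac{2\|(T_1,T_2)\|}{\|(Z_1,Z_2)\|}.
\]
For the lower bound I would instead use $\|T_1+iT_2\|\geq \|(T_1,T_2)\|$ together with $\|Z_1+iZ_2\|\leq 2\|(Z_1,Z_2)\|$, obtaining
\[
\frac{\|T_1+iT_2\|}{\|Z_1+iZ_2\|}\geq \frac{\|(T_1,T_2)\|}{2\|(Z_1,Z_2)\|}=\frac{1}{2}\,\frac{\|(T_1,T_2)\|}{\|(Z_1,Z_2)\|}.
\]
Together these are exactly the two displayed inequalities.

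There is no genuine obstacle here: the argument is a one-line division of two applications of Lemma \ref{L:normcomp}, with the only asymmetry being the choice of which endpoint (the factor $1$ or the factor $2$) to use on the numerator versus the denominator in each direction. The single point worth flagging is the tacit hypothesis $(Z_1,Z_2)\neq 0$, which is needed for the displayed ratios to be defined; this is implicit in the very statement of the corollary, and is in any case the only situation in which \eqref{NormIneq} will be used (to compare difference quotients in the proof of Theorem \ref{T:RCpart}).
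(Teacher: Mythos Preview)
Your proof is correct and is exactly the approach the paper takes: it simply says to apply Lemma~\ref{L:normcomp} to both the numerator and the denominator, which is precisely what you have spelled out. Your remark about the tacit nonvanishing of $(Z_1,Z_2)$ is also appropriate.
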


\begin{proof}[\bf Proof of Theorem \ref{T:RCpart}]
The proof is divided into four parts.\smallskip

\paragraph{\bf Part 1: $u$ and $v$ are real nc functions}
It is straightforward to check that $u$ and $v$ are graded and respect direct sums, since $f$ has these properties. Clearly $\cD$ is contained in $\cH_\tu{nc}^{2d}$. It remains to verify that $u$ and $v$ respect unitary equivalence. Let $(A,B)\in\cD_{n}$ and $U\in\BC^{n\times n}$ unitary so that $(UAU^*,UBU^*)\in\cD_{n}$. Set $X=A+iB \in\cD_f$. By definition of $\cD$ we have $UXU^*\in\cD_f$, and since $f$ respects similarities, and hence unitary equivalence, we have
\[
f(UXU^*) = U f(X) U^*.
\]
The left hand side specifies to
\[
f(UXU^*)=f(U A U^\ast+iU B U^\ast)=u(U A U^\ast,U B U^\ast)+iv(U A U^\ast,U B U^\ast),
\]
while on the right hand side we get
\[
U f(X) U^*=U f(A+iB) U^\ast=U u(A,B) U^\ast+iU v(A,B) U^\ast.
\]
Since the values of $u$ and $v$ are Hermitian and $\cH_n$ is closed under unitary equivalence, it follows that
\[
u(U A U^\ast,U B U^\ast)=U u(A,B) U^\ast \ands
v(U A U^\ast,U B U^\ast)=U v(A,B) U^\ast.
\]
Hence, $u$ and $v$ respect unitary equivalence.\smallskip

\paragraph{\bf Part 2: Proof of \eqref{DuDv}} Let $X=A+iB\in\cD_{f,n}$, $Z=Z_1+iZ_2\in (\BC^{n\times n})^d$ with $A,B,Z_1,Z_2\in\cH_\tu{nc}$. Assume $f$ is G-differentiable at X in direction $Z$. In this part we show that $u$ and $v$ are G-differentiable at $(A,B)$ in the direction $(Z_1,Z_2)$ and that their G-derivatives satisfy
\begin{equation}\label{DuDv2}
\begin{aligned}
Df(A+iB)(Z_1+i Z_2)&= Du(A,B)(Z_1,Z_2) + i Dv(A,B)(Z_1,Z_2).
\end{aligned}
\end{equation}
This proves \eqref{DuDv} and shows that $u$ and $v$ are G-differentiable in case $f$ is G-differentiable.

To see that our claim holds, note that for $0\neq t\in\BR$ we have
\begin{align*}
&\frac{f(X+tZ)-f(X)}{t}=\frac{f(A+iB+t(Z_1+iZ_2))-f(A+iB)}{t}\\
&\qquad=\frac{u(A+tZ_1,B+tZ_2)+iv(A+tZ_1,B+tZ_2)-u(A,B)-iv(A,B)}{t}\\
&\qquad=\frac{u(A+tZ_1,B+tZ_2)-u(A,B)}{t}+i\frac{v(A+tZ_1,B+tZ_2)-v(A,B)}{t}.
\end{align*}
The result follows by letting $t$ go to 0, and noting that in the right most side of the above identities the limits of the real and imaginary parts are independent.\smallskip

\paragraph{\bf Part 3: Cauchy-Riemann equations}
The proof follows along the same lines as the classical complex analysis proof. For $X=A+iB$, $Z=Z_1+iZ_2$ and $h\in\BR$ we have
\begin{align*}
&f(X+ihZ)-f(X)
=f(A+iB+ih(Z_1+iZ_2)-f(A+iB)\\
&\qquad =f(A-hZ_2+i(B+hZ_1))-f(A+iB)\\
&\qquad =u(A-hZ_2,B+hZ_1)+iv(A-hZ_2,B+hZ_1)-u(A,B)-iv(A,B)\\
&\qquad =u(A-hZ_2,B+hZ_1)-u(A,B)+i(v(A-hZ_2,B+hZ_1)-v(A,B)).
\end{align*}
Dividing by $ih$ and taking $h\to 0$ we obtain
\begin{align*}
D f (X)(Z)&=\lim_{h\to 0}\frac{f(X+ihZ)-f(X)}{ih}\\
&=\lim_{h\to 0}\frac{v(A-hZ_2,B+hZ_1)-v(A,B)}{h}+\\
&\qquad\qquad-i\lim_{h\to 0}\frac{u(A-hZ_2,B+hZ_1)-u(A,B)}{h}\\
&=Dv(A,B)(-Z_2,Z_1)-iDu(A,B)(-Z_2,Z_1).
\end{align*}
Comparing with \eqref{DuDv2} provides the desired equations.\smallskip

\paragraph{\bf Part 4: F-differentiability}
Assume $f$ is F-differentiable. This implies that $f$ is G-differentiable and hence $u$ and $v$ are G-differentiable, by Part 2. Since $Df$ is $\BC$-linear in the directional variable, it is clear from \eqref{DuDv} that $Du$ and $Dv$ are $\BR$-linear in the directional variable. Now let $X=A+iB$ with $(A,B)\in\cD_{n}$ and  $Z=Z_1+ i Z_2$ with $Z_1,Z_2\in\cH_n^d$. Then
\begin{align*}
&f(X+Z)-f(X)-Df(X)(Z)=\\
&\qquad= u(A+tZ_1,B+tZ_2)-u(A,B)-Du(A,B)(Z_1,Z_2) +\\
 & \qquad \qquad + i(v(A+tZ_1,B+tZ_2)-v(A,B)-Dv(A,B)(Z_1,Z_2)).
\end{align*}
Now apply Corollary \ref{C:normIneq} with $Z_1$ and $Z_2$ as above and
\begin{equation}\label{T1T2}
\begin{aligned}
T_1&=u(A+tZ_1,B+tZ_2)-u(A,B)-Du(A,B)(Z_1,Z_2),\\
T_2&=v(A+tZ_1,B+tZ_2)-v(A,B)-Dv(A,B)(Z_1,Z_2),
\end{aligned}
\end{equation}
and note that $Z\to 0$ if and only if $(Z_1,Z_2)\to 0$, by Lemma \ref{L:normcomp}. It then follows that
\begin{equation}\label{Fdiff}
 \lim_{\|Z\|\to 0} \frac{\|f(X+Z)-F(X)-Df(X)(Z)\|}{\|Z\|}=0
\end{equation}
holds if and only if
\begin{align*}
 \lim_{\|(Z_1,Z_2)\|\to 0} \frac{\|u(A+tZ_1,B+tZ_2)-u(A,B)-Du(A,B)(Z_1,Z_2)\|}{\|(Z_1,Z_2)\|}=0
\end{align*}
and
\begin{align*}
 \lim_{\|(Z_1,Z_2)\|\to 0} \frac{\|v(A+tZ_1,B+tZ_2)-v(A,B)-Dv(A,B)(Z_1,Z_2)\|}{\|(Z_1,Z_2)\|}=0.
\end{align*}
In particular, since \eqref{Fdiff} holds, and $(A,B)\in\cD_{n}$ and $Z_1,Z_2\in\cH_n^d$ were chosen arbitrarily, it follows that $u$ and $v$ are F-differentiable.
\end{proof}

The fact that the G-derivative of a G-differentiable nc function on a complex-open domain (and hence right-admissible) can be computed algebraically, via block upper triangular matrices, provides additional structure for its real and imaginary parts, which enables us to compute their G-derivatives algebraically as well.

\begin{proposition}\label{P:uvRespDiag}
Let $f$ be a nc function defined on an open nc set $\cD_f\subset\BC_\tu{nc}^d$ and define $u$ and $v$ as in \eqref{uvDef1}--\eqref{uvDef2}. Let $X=A+iB\in \cD_{f,n}$ and $Y=C+iD\in\cD_{f,m}$ and $Z\in(\BC^{n\times m})^d$ such that $\sbm{X&Z\\ 0 & Y}\in\cD_{f,n+m}$. Then
\begin{equation}\label{UTdec}
\left(\mat{cc}{A&\frac{1}{2}Z \\ \frac{1}{2}Z^* & C}, \mat{cc}{B&-\frac{i}{2}Z \\ \frac{i}{2}Z^* & D} \right)\in\cD
\end{equation}
and there exist $T_{X,Y,1},T_{X,Y,2}\in (\BC^{n\times m})^{2d}$ so that
\begin{equation}\label{DiagResp}
\begin{aligned}
u\left(\mat{cc}{A&\frac{1}{2}Z \\ \frac{1}{2}Z^* & C}, \mat{cc}{B&-\frac{i}{2}Z \\ \frac{i}{2}Z^* & D} \right)
&=\mat{cc}{u(A,B) & T_{X,Y,1}\\ T_{X,Y,1}^* & u(C,D)},\\
v\left(\mat{cc}{A&\frac{1}{2}Z \\ \frac{1}{2}Z^* & C}, \mat{cc}{B&-\frac{i}{2}Z \\ \frac{i}{2}Z^* & D} \right)
&=\mat{cc}{v(A,B) & T_{X,Y,2}\\ T_{X,Y,2}^* & v(C,D)}.
\end{aligned}
\end{equation}
Moreover, if $X=Y$, $Z=Z_1+ iZ_2$ with $Z_1,Z_2\in\cH_n^d$ and $f$ is locally bounded on slices, then
\[
Du(A,B)(Z_1,Z_2)= T_{X,Y,1}+ T_{X,Y,1}^*,\quad
Dv(A,B)(Z_1,Z_2)=T_{X,Y,2} + T_{X,Y,2}^*.
\]
\end{proposition}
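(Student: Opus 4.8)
The plan is to recognize the two Hermitian matrices appearing in \eqref{UTdec} as the real and imaginary parts of the single block upper triangular matrix
\[
W:=\mat{cc}{X & Z\\ 0 & Y}=\mat{cc}{A+iB & Z\\ 0 & C+iD}\in\cD_{f,n+m}.
\]
Indeed, a direct computation of $\re W=\half(W+W^*)$ and $\im W=\frac{1}{2i}(W-W^*)$ gives exactly the two matrices $\widehat A$ and $\widehat B$ in \eqref{UTdec}, and $\widehat A+i\widehat B=W$. Since $W\in\cD_f$, the definition of $\cD$ in \eqref{uvDef1} immediately yields $(\widehat A,\widehat B)\in\cD$, which is the first claim.

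Next I would apply $f$ to $W$ and exploit the right-admissibility of $\cD_f$. By the difference-differential formula \eqref{DDoperator} (with $r=1$), together with (NC-i)--(NC-iii), we obtain the block upper triangular value
\[
f(W)=\mat{cc}{f(X) & G\\ 0 & f(Y)},\qquad G:=\Delta f(X,Y)(Z)\in\BC^{n\times m}.
\]
Then I compute $u(\widehat A,\widehat B)=\re f(W)=\half(f(W)+f(W)^*)$ and $v(\widehat A,\widehat B)=\im f(W)=\frac{1}{2i}(f(W)-f(W)^*)$ blockwise. The diagonal blocks become $\half(f(X)+f(X)^*)=u(A,B)$ and $\half(f(Y)+f(Y)^*)=u(C,D)$ for $u$, and the analogous expressions $v(A,B),v(C,D)$ for $v$, while the corner block is $\half G$ (resp.\ $\frac{1}{2i}G$) and the opposite corner is its adjoint; this adjoint structure is automatic since $\re f(W)$ and $\im f(W)$ are Hermitian. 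Reading off the corners gives \eqref{DiagResp} with
\[
T_{X,Y,1}=\tfrac{1}{2}\,\Delta f(X,Y)(Z)\ands T_{X,Y,2}=\tfrac{1}{2i}\,\Delta f(X,Y)(Z).
\]

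For the final assertion I specialize $X=Y$ and $Z=Z_1+iZ_2$ with $Z_1,Z_2\in\cH_n^d$. Since $f$ is locally bounded on slices, Theorem 7.2 in \cite{KVV14} (recalled in Section \ref{S:DirDer}) gives $\Delta f(X,X)(Z)=Df(X)(Z)=Df(A+iB)(Z_1+iZ_2)=:G$. Hence $T_{X,Y,1}+T_{X,Y,1}^*=\half(G+G^*)=\re Df(A+iB)(Z_1+iZ_2)$, and a short computation with the factors of $i$ gives $T_{X,Y,2}+T_{X,Y,2}^*=\frac{1}{2i}(G-G^*)=\im Df(A+iB)(Z_1+iZ_2)$. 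Comparing with the formulas \eqref{DuDv} of Theorem \ref{T:RCpart} then yields $Du(A,B)(Z_1,Z_2)=T_{X,Y,1}+T_{X,Y,1}^*$ and $Dv(A,B)(Z_1,Z_2)=T_{X,Y,2}+T_{X,Y,2}^*$.

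The computations are all routine; the only thing that needs care is the bookkeeping of the factors $i$ and $\frac{1}{2i}$, making sure that the $(1,2)$- and $(2,1)$-blocks of $\im f(W)$ really are adjoints of one another and that $T_{X,Y,2}+T_{X,Y,2}^*$ collapses to $\im G$ rather than $\re G$. Conceptually the one step worth isolating is the identification of the matrices in \eqref{UTdec} as $\re W$ and $\im W$; once that is seen, the whole statement reduces to taking real and imaginary parts of the block-triangular value $f(W)$ supplied by the difference-differential operator.
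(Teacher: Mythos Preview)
Your proof is correct and follows essentially the same route as the paper's: identify the pair in \eqref{UTdec} as the real and imaginary parts of $W=\sbm{X&Z\\0&Y}$, apply the difference-differential formula to obtain $f(W)=\sbm{f(X)&\Delta f(X,Y)(Z)\\0&f(Y)}$, take real and imaginary parts blockwise to read off $T_{X,Y,1}=\tfrac{1}{2}\Delta f(X,Y)(Z)$ and $T_{X,Y,2}=-\tfrac{i}{2}\Delta f(X,Y)(Z)$, and then specialize to $X=Y$ using $\Delta f(X,X)(Z)=Df(X)(Z)$ together with Theorem~\ref{T:RCpart}. The only cosmetic difference is that you write $T_{X,Y,2}=\tfrac{1}{2i}\Delta f(X,Y)(Z)$, which is of course the same quantity.
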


\begin{proof}[\bf Proof]
The decomposition
\begin{equation}\label{Dec1}
\mat{cc}{X&Z\\0&Y}=\mat{cc}{A+iB& Z_1+iZ_2 \\ 0 &C+iD} =\mattwo{A}{\frac{1}{2}Z}{\frac{1}{2}Z^\ast}{C}+i\mattwo{B}{-\frac{i}{2}Z}{\frac{i}{2}Z^\ast}{D},
\end{equation}
together with $\sbm{X&Z\\ 0 & Y}\in\cD_{f,n+m}$ yields \eqref{UTdec}. Since $f$ is a nc function, we have
\[
f\left(\mattwo{X}{Z}{0}{Y}\right)=\mattwo{f(X)}{\De f(X,Y)(Z)}{0}{f(Y)},
\]
with $\De f(X,Y)(Z)$ the right nc difference-differential operator applied to $f$, at the point $(X,Y)$ and direction $Z$. Note that
\begin{align*}
\mattwo{f(X)}{\De f(X,Y)(Z)}{0}{f(Y)}
&=\mattwo{\frac{1}{2}(f(X)+f(X)^\ast)}{\frac{1}{2}\De f(X,Y)(Z)}{\frac{1}{2}\De f(X,Y)(Z)^\ast}{\frac{1}{2}(f(Y)+f(Y)^\ast)}+\\
&\qquad\qquad+i\mattwo{-\frac{i}{2}(f(X)-f(X)^\ast)}{-\frac{i}{2}\De f(X,Y)(Z)}{\frac{i}{2}\De f(X,Y)(Z)^\ast}{-\frac{i}{2}(f(Y)-f(Y)^\ast)}\\
&=\mattwo{u(A,B)}{\frac{1}{2}\De f(X,Y)(Z)}{\frac{1}{2}\De f(X,Y)(Z)^\ast}{u(C,D)}+\\
&\qquad\qquad+i\mattwo{v(A,B)}{-\frac{i}{2}\De f(X,Y)(Z)}{\frac{i}{2}\De f(X,Y)(Z)^\ast}{v(C,D)}.
\end{align*}
This formula for $f(\sbm{X&Z\\ 0 & Y})$ together with \eqref{Dec1} proves \eqref{DiagResp}, where we take $T_{X,Y,1}=\frac{1}{2}\De f(X,Y)(Z)$ and $T_{X,Y,2}=-\frac{1}{2}\De f(X,Y)(Z)$.

Now assume $X=Y$ and $f$ is locally bounded on slices. Then $f$ is G-differentiable and $\De f(X,Y)(Z)=D f(X)(Z)$. It now follows by Theorem \ref{T:RCpart} that
\[
T_{X,Y,1}+T_{X,Y,1}^*= \re D f(X)(Z) = D u (A,B)(Z_1,Z_2),
\]
and, similarly, $D v (A,B)(Z_1,Z_2)=T_{X,Y,2}+ T_{X,Y,2}^*$.
\end{proof}

Not all real nc functions ``respect diagonals'' as in \eqref{DiagResp}. Also, one may wonder whether \eqref{DiagResp} in some form extends beyond points of the form \eqref{UTdec} in case $u$ and $v$ are the real and imaginary parts of a nc function. This is also not the case in general. We illustrate this in the following example.

\begin{example}
Consider the following three real nc functions
\[
u(A,B)=A^2-B^2,\quad v(A,B)=AB+BA,\quad w(A,B)=A^2\quad ((A,B)\in\cH_\tu{nc}^2).
\]
Then $u$ and $v$ are the real and imaginary part of the nc function $f(X)=X^2$. For an arbitrary 2 $\times$ 2 block point
\[
(E,F):=\left(\mat{cc}{A&Z_1 \\ Z_1^* & C}, \mat{cc}{B&Z_2 \\ Z_2^* & D} \right)\in \cH_\tu{nc}^2
\]
we obtain:
\begin{align*}
u (E,F)
& = \mat{cc}{A^2-B^2 +Z_1Z_1^*- Z_2Z_2^* & AZ_1-B Z_2+ Z_1 C - Z_2 D  \\
Z_1^* A-Z_2^*B+ CZ_1^* -D Z_2^* & C^2-D^2 +Z_1^*Z_1- Z_2^*Z_2},\\
v(E,F)
& =\mat{cc}{AB + BA + Z_1 Z_2^* + Z_2Z _1^* & BZ_1 + AZ_2 +  Z_2C +  Z_1D \\
Z_1^* B + Z_2^*A + C Z_2^* + D Z_1^* & CD +DC + Z_1^* Z_2 + Z_2^* Z_1},\\
w(E,F)
& = \mat{cc}{A^2 +Z_1Z_1^* & AZ_1 + Z_1 C  \\ Z_1^* A+ CZ_1^*  & C^2 +Z_1Z_1^*}.
\end{align*}
It follows that $u(E,F)=\sbm{u(A,B) & *\\ * & u(C,D)}$ holds if and only if
\begin{equation}\label{Con1}
Z_1 Z_1^*=Z_2Z_2^* \ands Z_1^* Z_1=Z_2^* Z_2,
\end{equation}
while $v(E,F)=\sbm{v(A,B) & *\\ * & v(C,D)}$ holds if and only if
\begin{equation}\label{Con2}
Z_1 Z_2^*=-Z_2Z_1^* \ands Z_1^* Z_2=-Z_2^*Z_1.
\end{equation}
Both conditions are true in case $Z_2=\pm iZ_1$. Conversely, these conditions on $Z_1$ and $Z_2$ together imply $Z_2=-iZ_1$, but, in general, neither implies $Z_2=\pm iZ_1$ by itself. Indeed, the identities in \eqref{Con1} imply that the kernels and co-kernels of $Z_1$ and $Z_2$ coincide, so that we can reduce to the case where $Z_1$ and $Z_2$ are invertible. In that case, by Douglas' Lemma,  \eqref{Con1} is equivalent to the existence of unitary matrices $U$ and $V$ so that $Z_1=U Z_2=Z_2 V$. Assume $U$ and $V$ are like this, and $Z_1,Z_2$ invertible. Then \eqref{Con2} implies
\[
Z_2Z_2^*U Z_2Z_2^* = Z_2Z_2^* Z_1Z_2^* =- Z_2Z_1^* Z_2Z_2^* =-Z_2Z_2^*U^* Z_2Z_2^*.
\]
However, $Z_2$ is invertible, hence $Z_2Z_2^*$ is invertible. Thus we find that $U=-U^*$, which implies $U=\pm i I$. Hence $Z_1=\pm i Z_2$.

On the other hand, we have $w(E,F)=\sbm{w(A,B) & *\\ * & w(C,D)}$  precisely when $Z_1=0$. Hence \eqref{DiagResp} holds with $u$ or $v$ replaced by $w$ if and only if $Z=0$, which is true for any real nc function.
\end{example}

\section{Cauchy-Riemann equations: Sufficiency}\label{S:CR}

In this section we prove Theorem \ref{T:CR}. Throughout, let
\begin{equation}\label{uv}
u:\cD_u\to\cH_\tu{nc}^{2d} \ands v:\cD_v\to\cH_\tu{nc}^{2d}
\end{equation}
be real nc functions. For notational convenience we introduce the nc set
\[
\cD:=\cD_u \cap \cD_v.
\]
Now we define $f$ on $\cD_f:=\{A+i B \colon (A,B)\in \cD\}$ by
\begin{equation}\label{fdef}
f(A+iB)=u(A,B)+i v(A,B)\quad (A+iB\in\cD_f).
\end{equation}
It is easy to see that $f$ is graded, respects direct sums as well as unitary equivalence, since $u$ and $v$ have these properties. However, it is not necessarily the case that $f$ respects similarities, despite the fact that $u$ and $v$ do. The following proposition sums up the properties that $f$ has without further assumptions on $u$ and $v$ (except G-differentiability in the last part). The claims follow directly from \eqref{fdef}, hence we omit the proof.

\begin{proposition}\label{P:uvtof}
Let $u$ and $v$ be real nc functions as in \eqref{uv} and define $f$ as in \eqref{fdef}. Then $f$ is graded, respects direct sums and respects unitary equivalence. Moreover, in case $X=A+iB$ with $(A,B)\in \cD_{n}$, $Z=Z_1+i Z_2$ with $Z_1,Z_2\in\cH_n$, for any $n\in\BN$, and $u$ and $v$ are G-differentiable at $(A,B)$ in direction $(Z_1,Z_2)$, then
\begin{equation}\label{RealDiff}
\lim_{\BR\ni t\to 0}
\frac{f(X+tZ)-f(X)}{t}=Du(A,B)(Z_1,Z_2) + i Dv(A,B)(Z_1,Z_2).
\end{equation}
\end{proposition}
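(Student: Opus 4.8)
The plan is to verify each assertion by directly unwinding the definition \eqref{fdef} of $f$ in terms of $u$ and $v$, exploiting throughout the fact that real nc functions are $\cH_\tu{nc}$-valued, so that for $(A,B)\in\cD_n$ the matrices $u(A,B)$ and $v(A,B)$ are genuinely the real and imaginary parts of $f(A+iB)$ and the decomposition $f=u+iv$ behaves exactly like a real/imaginary splitting after evaluation.

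First I would establish the three structural properties. For gradedness, if $(A,B)\in\cD_n$ then $A,B\in\cH_n^d$ and $u(A,B),v(A,B)\in\cH_n$ because $u$ and $v$ are graded, whence $f(A+iB)=u(A,B)+iv(A,B)\in\BC^{n\times n}$, while $A+iB\in(\BC^{n\times n})^d$. For direct sums, given $A+iB$ and $A'+iB'$ in $\cD_f$ I would use $(A+iB)\oplus(A'+iB')=(A\oplus A')+i(B\oplus B')$, apply the direct-sum property of $u$ and $v$ to $u(A\oplus A',B\oplus B')$ and $v(A\oplus A',B\oplus B')$, and observe that forming the combination $u+iv$ commutes with taking direct sums. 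For unitary equivalence, given $X=A+iB\in\cD_{f,n}$ and $U\in\BC^{n\times n}$ unitary with $UXU^*\in\cD_f$, I would note $UXU^*=UAU^*+iUBU^*$ with $UAU^*,UBU^*$ Hermitian, so $(UAU^*,UBU^*)\in\cD$; applying (RNC-iii) for $u$ and $v$ then gives $f(UXU^*)=Uu(A,B)U^*+iUv(A,B)U^*=Uf(X)U^*$.

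For the derivative formula \eqref{RealDiff}, I would substitute $X+tZ=(A+tZ_1)+i(B+tZ_2)$ for $0\neq t\in\BR$ and use \eqref{fdef} to write
\[
\frac{f(X+tZ)-f(X)}{t}=\frac{u(A+tZ_1,B+tZ_2)-u(A,B)}{t}+i\,\frac{v(A+tZ_1,B+tZ_2)-v(A,B)}{t}.
\]
The hypothesis that $u$ and $v$ are G-differentiable at $(A,B)$ in direction $(Z_1,Z_2)$ guarantees both that $(A+tZ_1,B+tZ_2)\in\cD$ for small $t$ (so that $X+tZ\in\cD_f$) and that the two real difference quotients converge, to $Du(A,B)(Z_1,Z_2)$ and $Dv(A,B)(Z_1,Z_2)$ respectively. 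Letting $t\to0$ and taking the limits of the $\cH_\tu{nc}$-valued real and imaginary parts independently yields \eqref{RealDiff}.

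There is no substantive obstacle: the statement is a direct consequence of the definitions, which is why the paper omits the proof. The only points requiring a moment's care are that unitary conjugation preserves Hermitian-ness, so that the images $UAU^*,UBU^*$ actually lie in $\cH_n^d$ and the domain $\cD$ is genuinely entered, and that splitting the difference quotient into its $u$- and $v$-contributions is legitimate precisely because $u$ and $v$ are Hermitian-valued, so the two limits do not interfere.
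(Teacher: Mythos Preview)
Your proposal is correct and is exactly the direct verification from the definition \eqref{fdef} that the paper alludes to when it states ``The claims follow directly from \eqref{fdef}, hence we omit the proof.'' Each step---gradedness, direct sums, unitary equivalence, and the real-parameter difference quotient splitting---is handled precisely as one would expect, and your remarks about Hermitian-ness being preserved under unitary conjugation and about the independence of the two limits are the only points that require any care.
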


\begin{remark}
Without additional assumptions on $u$ and $v$ it is possible to prove something slightly stronger than the fact that $f$ respects unitary similarity. If $X=A+iB\in\cD_{f,n}$ and $S\in \BC^{n\times n}$ is invertible are such that $C:=S A S^{-1}$ and $D:= S BS^{-1}$ are in $\cH_n^d$, then it still follows easily that $f(SXS^{-1})=S f(X)S^{-1}$, using the fact that $u$ and $v$ respect similarity. Note that in this case $(A,B)$ and $(C,D)$ are not only similar via $S$, but also unitarily equivalent via the unitary matrix in the polar decomposition of $S$, cf., Remark \ref{R:iii'}. In general, of course, it will not be the case that $C$ and $D$ are Hermitian.
\end{remark}

To prove, under the conditions of Theorem \ref{T:CR}, that $f$ respects similarity, and hence is a nc function, we will use Lemma 2.3 of \cite{PTD17}. To apply this lemma, we need to prove that $f$ has the following two properties:
\begin{itemize}
  \item[(i)] $f$ is F-differentiable;
  \item[(ii)] the following identity holds
  \begin{equation}\label{ComForm0}
Df(X)([T,X])=[T,f(X)],\quad X\in\cD_{f,n},\ T\in\BC^{n\times n},\ n=1,2,\ldots.
\end{equation}
\end{itemize}
As before, $[S,Q]$ denotes the commutator of square matrices $S,Q$ of the same size, applied entrywise in case $S$ and $Q$ are tuples of matrices. In case only one of $S$ and $Q$ is a tuple, then the other one is identified with a tuple of the same length and the given matrix in each entry.  Note that if $S$ and $Q$ are Hermitian, then $[S,Q]$ is skew-Hermitian, and hence $[iS,Q]=i[S,Q]$ is Hermitian.

To achieve more than in Proposition \ref{P:uvtof} we require the nc Cauchy-Riemann equations \eqref{CR-intro} which, for convenience, we recall here: For $n=1,2,\ldots$
\begin{equation}\label{CR2}
Du(A,B)(Z_1,Z_2)=Dv(A,B)(-Z_2,Z_1),\ (A,B)\in \cD_{n},\ Z_1,Z_2\in\cH_n.
\end{equation}

From Proposition \ref{P:uvtof} it is clear what the G-derivative of $f$ should be in case $f$ is F-differentiable. For $X=A+iB\in\cD_{f,n}$ and $Z_1+iZ_2\in(\BC^{n\times n})^d$ we define
\begin{align}\label{DefTilD}
\wtil{D}f(A+iB)(Z_1+iZ_2) & := Du(A,B)(Z_1,Z_2) + i Dv(A,B)(Z_1,Z_2),
\end{align}
provided the G-derivatives of $u$ and $v$ exist in $(A,B)$. As a first step we show that $\wtil{D}f(X)(Z)$ is linear in $Z$.

\begin{lemma}\label{L:Hom}
Let $u$ and $v$ be  G-differentiable, real nc functions that satisfy the nc Cauchy-Riemann equations \eqref{CR2}. Then the map $\wtil{D}f(X)(Z)$ defined in \eqref{DefTilD} is linear in the directional variable $Z$.
\end{lemma}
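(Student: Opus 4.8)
The plan is to obtain $\BR$-linearity essentially for free and then upgrade to $\BC$-linearity by verifying a single scalar relation, namely the behaviour under multiplication by $i$. Since $u$ and $v$ are G-differentiable real nc functions, their G-derivatives $Du(A,B)(\cdot)$ and $Dv(A,B)(\cdot)$ are $\BR$-linear in the directional variable $(Z_1,Z_2)\in\cH_n^{2d}$, as recorded in Section \ref{S:DirDer}. Because the real-linear map $Z=Z_1+iZ_2\mapsto(Z_1,Z_2)$ is a bijection of $(\BC^{n\times n})^d$ onto $\cH_n^{2d}$, the defining formula \eqref{DefTilD} immediately makes $Z\mapsto\wtil{D}f(X)(Z)$ additive and $\BR$-homogeneous. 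Thus it remains only to establish $\BC$-homogeneity, and since $\BR$-homogeneity is already in hand, it suffices to prove the single identity $\wtil{D}f(X)(iZ)=i\,\wtil{D}f(X)(Z)$.

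Next I would compute both sides of this identity explicitly. Writing $iZ=-Z_2+iZ_1$, definition \eqref{DefTilD} gives
\[
\wtil{D}f(X)(iZ)=Du(A,B)(-Z_2,Z_1)+iDv(A,B)(-Z_2,Z_1),
\]
whereas a direct expansion yields
\[
i\,\wtil{D}f(X)(Z)=-Dv(A,B)(Z_1,Z_2)+iDu(A,B)(Z_1,Z_2).
\]
All four G-derivatives here are Hermitian-valued, since $u$ and $v$ map into $\cH_\tu{nc}$ and the difference quotients defining their G-derivatives are taken over real $t$. Consequently the decomposition of a matrix into a Hermitian part plus $i$ times a Hermitian part is unique, and the desired identity reduces to the two matrix equations
\[
Dv(A,B)(-Z_2,Z_1)=Du(A,B)(Z_1,Z_2)\ands Du(A,B)(-Z_2,Z_1)=-Dv(A,B)(Z_1,Z_2).
\]

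Finally I would verify these two equations. The first is precisely the nc Cauchy-Riemann equation \eqref{CR2}. For the second, I would apply \eqref{CR2} with the pair $(-Z_2,Z_1)$ in place of $(Z_1,Z_2)$, obtaining $Du(A,B)(-Z_2,Z_1)=Dv(A,B)(-Z_1,-Z_2)$, and then invoke the $\BR$-linearity of $Dv(A,B)(\cdot)$ to rewrite the right-hand side as $-Dv(A,B)(Z_1,Z_2)$. This settles both equations, hence the homogeneity relation, and with additivity completes the proof of $\BC$-linearity. I do not expect a genuine obstacle here beyond bookkeeping; the only point requiring care is recognizing that \eqref{CR2} must be applied a second time with a rotated argument, which is exactly where the skew-symmetric action of multiplication by $i$ is absorbed by the Cauchy-Riemann equations.
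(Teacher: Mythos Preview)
Your proof is correct and follows the same overall strategy as the paper: establish $\BR$-linearity from G-differentiability of $u$ and $v$, then upgrade to $\BC$-linearity via the Cauchy-Riemann equations. The only difference is a mild streamlining: the paper verifies $\BC$-homogeneity for an arbitrary scalar $z=re^{i\theta}$ by expanding $e^{i\theta}Z$ in terms of $\cos\theta$, $\sin\theta$ and applying \eqref{CR2} inside the resulting trigonometric expressions, whereas you observe that once $\BR$-linearity is in hand it suffices to check the single identity $\wtil{D}f(X)(iZ)=i\,\wtil{D}f(X)(Z)$, which you then reduce to two applications of \eqref{CR2}. Your route is a bit more economical but not conceptually different.
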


\begin{proof}[\bf Proof] The maps $D_u$ and $D_v$ are $\BR$-linear in the directional variable. Hence $\wtil{D}f$ is additive and $\BR$-homogeneous in the directional variable. Write $z\in\BC$ as $z=r e^{i \theta}$ with $r\geq 0$ and $\theta\in[0,2\pi]$. Note that
\begin{align*}
e^{i\theta} Z & =(\cos\theta + i \sin \theta)(Z_1+i Z_2)
=( Z_1 \cos \theta -  Z_2 \sin \theta) + i( Z_1 \sin\theta +  Z_2\cos\theta).
\end{align*}
Set $Z_{1,\theta}:= Z_1 \cos \theta -  Z_2 \sin \theta$ and $Z_{2,\theta}:=Z_1 \sin\theta +  Z_2\cos\theta$. It follows that
\begin{align}
\wtil{D}f(X)(zZ)
&=Du(A,B)(rZ_{1,\theta},rZ_{2,\theta}) + i Dv(A,B)(rZ_{1,\theta},rZ_{2,\theta})\notag \\
&=r(Du(A,B)(Z_{1,\theta},Z_{2,\theta}) + i Dv(A,B)(Z_{1,\theta},Z_{2,\theta})).\label{Id1}
\end{align}

Using that G-derivatives $Du$ and $Dv$ are $\BR$-linear in the the directional variables together with the Cauchy-Riemann equations \eqref{CR2} yields
\begin{align*}
Du(A,B)(Z_{1,\theta},Z_{2,\theta}) & =
\cos\theta Du(A,B)(Z_1,Z_2) + \sin\theta Du(A,B)(-Z_2,Z_1) \\
&=\cos\theta Du(A,B)(Z_1,Z_2) - \sin\theta Dv(A,B)(Z_1,Z_2).
\end{align*}
Similarly, we have
\begin{align*}
Dv(A,B)(Z_{1,\theta},Z_{2,\theta}) & =
\cos\theta Dv(A,B)(Z_1,Z_2) + \sin\theta Dv(A,B)(-Z_2,Z_1) \\
&=\cos\theta Dv(A,B)(Z_1,Z_2) + \sin\theta Du(A,B)(Z_1,Z_2).
\end{align*}
Combining these formulas shows
\begin{align}
&Du(A,B)(Z_{1,\theta},Z_{2,\theta}) + i Dv(A,B)(Z_{1,\theta},Z_{2,\theta})
=\notag \\
&\quad=(\cos \theta + i\sin \theta) Du(A,B)(Z_1,Z_2)
+((\cos \theta + i\sin \theta)) i Dv(A,B)(Z_1,Z_2)\notag \\
&\quad=e^{i\theta}(Du(A,B)(Z_1,Z_2) +i Dv(A,B)(Z_1,Z_2)).
\end{align}
Together with \eqref{Id1} this yields
\[
\wtil{D}f(X)(zZ)=z(Du(A,B)(Z_1,Z_2) +i Dv(A,B)(Z_1,Z_2)),
\]
so that $\wtil{D}$ is $\BC$-homogeneous in the directional variable, and hence $\BC$-linear.
\end{proof}

With linearity out of the way, it is straightforward to prove $f$ is F-differentiable in case $u$ and $v$ are F-differentiable.

\begin{lemma}\label{L:compdiff}
Let $u$ and $v$ be  F-differentiable, real nc functions that satisfy the nc Cauchy-Riemann equations \eqref{CR2}. Then $f$ defined by \eqref{fdef} is F-differentiable with G-derivative given by $Df(X)(Z)=\wtil{D}f(X)(Z)$ as in \eqref{DefTilD}.
\end{lemma}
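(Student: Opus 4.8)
The plan is to reduce the $F$-differentiability of $f$ to the already-established $F$-differentiability of $u$ and $v$, exactly mirroring the structure of Part 4 of the proof of Theorem \ref{T:RCpart}, but run in the converse direction. The two hypotheses we now have in hand are: $u$ and $v$ are $F$-differentiable (in particular $G$-differentiable), and they satisfy the nc Cauchy-Riemann equations \eqref{CR2}. The role of the latter is precisely to guarantee, via Lemma \ref{L:Hom}, that the candidate derivative $\wtil{D}f(X)(Z)$ defined in \eqref{DefTilD} is $\BC$-linear in the directional variable $Z$; without $\BC$-linearity the map $Z\mapsto\wtil{D}f(X)(Z)$ could not serve as the $F$-derivative of a function taking values in a complex vector space. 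So the first thing I would record is that, by Lemma \ref{L:Hom}, $\wtil{D}f(X)$ is a $\BC$-linear map, hence a legitimate candidate for $Df(X)$.

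First I would fix $X=A+iB\in\cD_{f,n}$ with $(A,B)\in\cD_n$, and write an arbitrary direction as $Z=Z_1+iZ_2$ with $Z_1,Z_2\in\cH_n^d$. The key algebraic identity is the decomposition of the $F$-difference quotient numerator into its real and imaginary parts, namely
\begin{align*}
f(X+Z)-f(X)-\wtil{D}f(X)(Z)
&=\big(u(A+Z_1,B+Z_2)-u(A,B)-Du(A,B)(Z_1,Z_2)\big)\\
&\quad + i\big(v(A+Z_1,B+Z_2)-v(A,B)-Dv(A,B)(Z_1,Z_2)\big),
\end{align*}
which follows immediately from \eqref{fdef} and \eqref{DefTilD}. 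The real part is exactly the $F$-remainder for $u$ at $(A,B)$ in direction $(Z_1,Z_2)$, and the imaginary part is the $F$-remainder for $v$. I would then invoke Corollary \ref{C:normIneq} to compare the norm of this complex-valued remainder (divided by $\|Z\|$) with the norm of the corresponding pair of real-valued remainders (divided by $\|(Z_1,Z_2)\|$), and use Lemma \ref{L:normcomp} to conclude that $\|Z\|\to 0$ if and only if $\|(Z_1,Z_2)\|\to 0$.

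The final step is to take the limit. Since $u$ and $v$ are $F$-differentiable at $(A,B)$, both real-valued remainder quotients tend to $0$ as $\|(Z_1,Z_2)\|\to 0$. Feeding this into the two-sided bound of Corollary \ref{C:normIneq} forces the complex remainder quotient to tend to $0$ as well, which is exactly the $F$-differentiability condition \eqref{F-diff} for $f$, with $F$-derivative $\wtil{D}f(X)$. Because $X$ and $Z$ were arbitrary, $f$ is $F$-differentiable on all of $\cD_f$, and \eqref{DefTilD} identifies $Df(X)(Z)=\wtil{D}f(X)(Z)$, giving the $G$-derivative formula. I do not expect any genuine obstacle here: the content has been front-loaded into Lemma \ref{L:Hom} (linearity, which is where the Cauchy-Riemann equations are used) and into the norm-comparison Corollary \ref{C:normIneq}. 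The only points requiring care are that the candidate derivative must be known to be linear \emph{before} one may call it an $F$-derivative — which is why Lemma \ref{L:Hom} is cited first — and that the equivalence of the two limit conditions rests on the uniform norm comparison rather than on any argument specific to $f$.
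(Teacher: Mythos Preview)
Your proposal is correct and follows essentially the same approach as the paper's proof: the paper likewise first records that $\wtil{D}f$ is $\BC$-linear in the directional variable, then writes out the same decomposition of $f(X+Z)-f(X)-\wtil{D}f(X)(Z)$ into the $u$- and $v$-remainders, and finally refers back to the norm-comparison argument of Part~4 of Theorem~\ref{T:RCpart} run in the opposite direction. Your version is slightly more explicit in citing Lemma~\ref{L:Hom}, Corollary~\ref{C:normIneq}, and Lemma~\ref{L:normcomp} by name, but the content is identical.
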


\begin{proof}[\bf Proof] The proof is similar to the last part of the proof of Theorem \ref{T:RCpart}.
Since $u$ and $v$ are F-differentiable, they are G-differentiable, and thus $\wtil{D}f$ is $\BC$-linear in the directional variable. To see that $f$ is F-differentiable,  note that for $X=A+iB\in\cD_{f,n}$ and $Z=Z_1+iZ_2$, $Z_1,Z_2\in\cH_n^d$, we have
\begin{align*}
&f(X+Z)-f(X)-\wtil{D}f(X)(Z)=\\
&\qquad=(u(A+Z_1,B+Z_2)-u(A,B)-Du(A,B)(Z_1,Z_2))+\\
&\qquad \qquad +i(v(A+Z_1,B+Z_2)-v(A,B)-Dv(A,B)(Z_1,Z_2)).
\end{align*}
Using $T_1$ and $T_2$ as in \eqref{T1T2} the same argument applies, in the opposite direction, to conclude that F-differentiability of $u$ and $v$ implies F-differentiability of $f$.
\end{proof}

\begin{lemma}\label{L:ComForm}
Let $u$ and $v$ be  F-differentiable, real nc functions that satisfy the nc Cauchy-Riemann equations \eqref{CR2}.  Define $f$ as in \eqref{fdef}. Then \eqref{ComForm0} holds.
\end{lemma}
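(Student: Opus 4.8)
The plan is to derive \eqref{ComForm0} by differentiating the unitary equivariance of $f$ along one-parameter unitary groups: first for skew-Hermitian $T$, and then for arbitrary $T$ by complex linearity. Throughout I would use that, by Lemmas~\ref{L:Hom} and~\ref{L:compdiff}, $f$ is F-differentiable with $\BC$-linear G-derivative $Df(X)=\wtil{D}f(X)$, and that $f$ respects unitary equivalence by Proposition~\ref{P:uvtof}.

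First I would treat a skew-Hermitian $T$, i.e.\ $T^*=-T$. For such $T$ the matrices $U_t:=e^{tT}$ $(t\in\BR)$ are unitary, with $U_t^*=e^{-tT}$. Fix $X=A+iB\in\cD_{f,n}$ and consider the path $\gamma(t):=U_t X U_t^*$. Since $U_t A U_t^*$ and $U_t B U_t^*$ are Hermitian, $\gamma(t)$ is of the admissible form for $\cD_f$; and because $\cD_f$ is open and $\gamma$ is continuous with $\gamma(0)=X\in\cD_f$, we have $\gamma(t)\in\cD_f$ for all $t$ in a neighbourhood of $0$. On this neighbourhood Proposition~\ref{P:uvtof} gives
\[
f\big(U_t X U_t^*\big)=U_t f(X) U_t^*.
\]
Differentiating the right-hand side at $t=0$ yields $Tf(X)-f(X)T=[T,f(X)]$, while the path satisfies $\gamma'(t)=[T,\gamma(t)]$ entrywise, so $\gamma(0)=X$ and $\gamma'(0)=[T,X]$. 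Hence, once the left-hand side is differentiated (see below), \eqref{ComForm0} holds for every skew-Hermitian $T$.

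The second step extends this to arbitrary $T\in\BC^{n\times n}$ by linearity. Both maps $T\mapsto Df(X)([T,X])$ and $T\mapsto[T,f(X)]$ are $\BC$-linear in $T$: the commutator $[T,\cdot]$ is $\BC$-linear in $T$ (applied entrywise to the tuple $X$), and $Df(X)$ is $\BC$-linear by Lemma~\ref{L:Hom}. Thus $\Phi(T):=Df(X)([T,X])-[T,f(X)]$ is $\BC$-linear in $T$ and vanishes on all skew-Hermitian matrices. Writing any $T$ as $T=T_1+iT_2$ with $T_1,T_2$ Hermitian, the matrices $iT_1,iT_2$ are skew-Hermitian and $T=-i(iT_1)+(iT_2)$, so the skew-Hermitian matrices $\BC$-span $\BC^{n\times n}$. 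Therefore $\Phi\equiv0$, which is \eqref{ComForm0} for every $T$.

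The one step requiring care is the differentiation of $t\mapsto f(\gamma(t))$ at $t=0$, i.e.\ the claim $\tfrac{d}{dt}f(\gamma(t))\big|_{t=0}=Df(X)(\gamma'(0))=Df(X)([T,X])$. This is a chain-rule statement for which F-differentiability of $f$, rather than mere G-differentiability, is exactly what is needed: writing $\gamma(t)=X+t\gamma'(0)+o(t)$ and combining the Fr\'echet estimate \eqref{F-diff} with the $\BC$-linearity of $Df(X)$ gives $f(\gamma(t))=f(X)+t\,Df(X)(\gamma'(0))+o(t)$, whence the derivative is as claimed. I expect this to be the main (if modest) obstacle; the remainder is formal manipulation of the unitary equivariance together with the spanning argument.
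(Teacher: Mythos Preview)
Your argument is correct and takes a genuinely different route from the paper's proof. The paper proceeds by direct computation: it writes $X=A+iB$, $T=T_1+iT_2$, expands $[T,X]$ into its Hermitian real and imaginary parts $Z_1,Z_2$, uses the formula $Df(X)(Z)=Du(A,B)(Z_1,Z_2)+iDv(A,B)(Z_1,Z_2)$ from Lemma~\ref{L:compdiff}, applies the Cauchy--Riemann equations \eqref{CR2} explicitly to regroup terms, and then invokes Part~(a) of Lemma~2.3 in \cite{PTD17} for $u$ and $v$ separately (i.e., $Dw(A,B)([iS,(A,B)])=[iS,w(A,B)]$ for Hermitian $S$) to recognise the result as $[T,f(X)]$. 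Your approach bypasses this bookkeeping by differentiating the unitary equivariance of $f$ (Proposition~\ref{P:uvtof}) along the one-parameter group $e^{tT}$ for skew-Hermitian $T$, and then lifts to arbitrary $T$ via the $\BC$-linearity of $Df(X)$ already established in Lemma~\ref{L:Hom}. In effect, you are reproving the content of \cite[Lemma~2.3(a)]{PTD17} directly for $f$ rather than applying it to $u$ and $v$ and reassembling; the Cauchy--Riemann equations enter only implicitly through Lemma~\ref{L:Hom}. Your argument is cleaner and more conceptual; the paper's computation, on the other hand, makes visible exactly where the Cauchy--Riemann identity is consumed. The chain-rule step you flag is indeed the only place requiring care, and your justification via the Fr\'echet estimate and finite-dimensionality (hence boundedness) of $Df(X)$ is sound.
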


\begin{proof}[\bf Proof]
Let $X=A+iB$ and $T=T_1+i T_2$. Then
\[
[T,X]=([iT_1,B]+[iT_2,A])+i([iT_1,-A]+[iT_2,B]).
\]
Set $Z_1=[iT_1,B]+[iT_2,A]$ and $Z_2=[iT_1,-A]+[iT_2,B]$. By Lemma \ref{L:compdiff} we obtain
\[
Df(X)([T,X])=Du(A,B)(Z_1,Z_2) + i Dv(A,B)(Z_1,Z_2).
\]
Note that
\begin{align*}
Du(A,B)(Z_1,Z_2)
& = Du(A,B)([iT_1,B]+[iT_2,A],[iT_1,-A]+[iT_2,B])\\
& = Du(A,B)([iT_2,A],[iT_2,B])
+ Du(A,B)([iT_1,B],[iT_1,-A])\\
& = Du(A,B)([iT_2,(A,B)])
+ Du(A,B)([iT_1,(B,-A)]).
\end{align*}
Applying the Cauchy-Riemann equations \eqref{CR2} to the second summand gives
\begin{align*}
Du(A,B)(Z_1,Z_2)
& = Du(A,B)([iT_2,(A,B)])
+ Dv(A,B)([iT_1,(A,B)]).
\end{align*}
Now use that Part (a) Lemma 2.3 of \cite{PTD17} applies to $u$ and $v$. This yields
\begin{align*}
Du(A,B)(Z_1,Z_2) & = [iT_2,u(A,B)] + [iT_1,v(A,B)].
\end{align*}
Similarly, for $Dv(A,B)(Z_1,Z_2)$ we get
\begin{align*}
Dv(A,B)(Z_1,Z_2)
& = Dv(A,B)([iT_2,(A,B)])
+ Dv(A,B)([iT_1,(B,-A)])\\
& = Dv(A,B)([iT_2,(A,B)])
+ Dv(A,B)([iT_1,(-A,-B)])\\
& = Dv(A,B)([iT_2,(A,B)])
- Du(A,B)([iT_1,(A,B)])\\
& = [iT_2, v(A,B)] - [iT_1, u(A,B)].
\end{align*}
Therefore, we have
\begin{align*}
&Df(X)([T,X])=\\
&\qquad\qquad=[iT_2,u(A,B)] + [iT_1,v(A,B)]
 + i ([iT_2, v(A,B)] - [iT_1, u(A,B)])\\
 &\qquad\qquad=[iT_2,u(A,B)]  - i[iT_1, u(A,B)]
  + [iT_1,v(A,B)] + i [iT_2, v(A,B)] \\
   &\qquad\qquad=[T_1+ iT_2,u(A,B)]   + [T_1 + i T_2,i v(A,B)]\\
   &\qquad\qquad=[T, u(A,B)+ i v(A,B) ]=[T,f(X)].\qedhere
\end{align*}
\end{proof}

\begin{proof}[\bf Proof of Theorem \ref{T:CR}]
The proof of this theorem is now straightforward. The fact that $f$ is graded and respects direct sums follows from Proposition \ref{P:uvtof}. Lemma \ref{L:compdiff} yields the F-differentiability of $f.$ Finally, from Lemma \ref{L:ComForm} we have that \eqref{ComForm0} holds and combining this with the fact that $f$ is F-differentiable we can apply Lemma 2.3 of \cite{PTD17} to conclude that $f$ respects similarities. Therefore, $f$ is a F-differentiable nc function.
\end{proof}

\begin{remark}\label{R:G-diff}
As pointed out in \cite{PTD17}, even in classical complex analysis, G-differen\-tiability of $u$ and $v$, i.e., existence of partial derivatives, together with the Cauchy-Riemann equations is not strong enough to prove analyticity of $f$. Continuity of the partial derivatives provides F-differentiability, which is strong enough; this corresponds to the approach taken in the present paper. The Looman-Menchoff theorem, cf., \cite[Page 199]{S64}, states that continuity of $f$, and hence of $u$ and $v$, is also sufficient. This in turn implies that $u$ and $v$ were F-differentiable from the start. As the proof of the Looman-Menchoff theorem requires the Baire category theorem and Lebesgue integration, it is not clear whether a similar relaxation of Theorem \ref{T:CR} can be achieved in the context considered here. In particular, the theory of integration of nc functions does not appear to be well developed so far. We are just aware of the paper \cite{PV18} on the nc Hardy space over the unitary matrices.
 \end{remark}

\paragraph{\bf Acknowledgments}
This work is based on research supported in part by the National Research Foundation of South Africa (NRF) and the DST-NRF Centre of Excellence in Mathematical and Statistical Sciences (CoE-MaSS). Any opinion, finding and conclusion or recommendation expressed in this material is that of the authors and the NRF and CoE-MaSS do not accept any liability in this regard.



\begin{thebibliography}{99}


\bibitem{AH01}
K. Atkinson and W. Han, {\em Theoretical numerical analysis. A functional analysis framework}, Texts in Applied Mathematics {\bf 39}, Springer-Verlag, New York, 2001.


\bibitem{DGM75}
S.A.R. Disney, J.D. Gray, and S.A. Morris, Is a function that satisfies the Cauchy-Riemann equations necessarily analytic? {\em Austral.\ Math.\ Soc.\ Gaz.} {\bf 2} (1975), 67–81.



\bibitem{JS18}
T. Jiang and H. Sendov, On differentiability of a class of orthogonally invariant functions on several operator variables, {\em Oper.\ Matrices} {\bf 12} (2018), 711–-721.

\bibitem{GM78}
J.D. Gray and S.A. Morris, When is a function that satisfies the Cauchy-Riemann equations analytic? {\em Amer.\ Math.\ Monthly} {\bf 85} (1978),  246–256.

\bibitem{HP57}
E. Hille and R.S. Phillips, {\em Functional analysis and semi-groups}, American Mathematical Society Colloquium Publications {\bf 31}, American Mathematical Society, Providence, R.I., 1957.

\bibitem{HJ13}
R.A. Horn and C.R. Johnson, {\em Matrix analysis. Second edition}, Cambridge University Press, Cambridge, 2013.

\bibitem{KVV14}
D.S. Kaliuzhnyi-Verbovetskyi and V. Vinnikov, {\em Foundations of free noncommutative function theory}, Mathematical Surveys and Monographs {\bf 199}, American Mathematical Society, Providence, RI, 2014.

\bibitem{P-Arx16}
M. P\'{a}lfia, L\"{o}wner's Theorem in several variables, preprint, arXiv:1405.5076.

\bibitem{P18}
J.E. Pascoe, The noncommutative Löwner theorem for matrix monotone functions over operator systems, {\em Linear Algebra Appl.} {\bf 541} (2018), 54–-59.

\bibitem{PTD17}
J.E. Pascoe and R. Tully-Doyle, Free Pick functions: representations, asymptotic behavior and matrix monotonicity in several noncommuting variables, {\em J. Funct.\ Anal.} {\bf 273} (2017), 283-–328.


\bibitem{P13}
J.-P. Penot, {\em Calculus without derivatives}, Graduate Texts in Mathematics {\bf 266}, Springer, New York, 2013.

\bibitem{PV18}
M. Popa and V. Vinnikov, $H^2$ spaces of non-commutative functions, {\em Complex Anal.\ Oper.\ Theory} {\bf 12} (2018), 945-–967.

\bibitem{S64}
S. Saks, {\em Theory of the integral. Second revised edition}, Dover Publications, Inc., New York, 1964.




\bibitem{T73}
J.L. Taylor, Functions of several noncommuting variables, {\em Bull.\ Amer.\ Math.\ Soc.} {\bf 79} (1973), 1–-34.

\bibitem{Z46}
M.A. Zorn,  Derivatives and Fréchet differentials, {\em Bull.\ Amer.\ Math.\ Soc.} {\bf 52} (1946), 133–137.

\end{thebibliography}
\end{document}